\documentclass[11pt,reqno]{amsart}

\usepackage[T1]{fontenc}
\usepackage{lmodern}
\usepackage{microtype}
\usepackage{mathtools}
\usepackage{amssymb}
\usepackage{mathrsfs}
\usepackage{graphicx}
\usepackage{enumitem}
\usepackage{xcolor}
\usepackage[colorlinks=true,linkcolor=blue!45!black,citecolor=blue!45!black,urlcolor=blue!45!black]{hyperref}

\numberwithin{equation}{section}
\theoremstyle{plain}
\newtheorem{theorem}{Theorem}[section]
\newtheorem{proposition}[theorem]{Proposition}
\newtheorem{lemma}[theorem]{Lemma}

\theoremstyle{definition}
\newtheorem{definition}[theorem]{Definition}
\newtheorem{remark}[theorem]{Remark}

\newcommand{\R}{\mathbb R}
\newcommand{\N}{\mathbb N}
\newcommand{\J}{\mathcal J}
\newcommand{\Hh}{\mathcal H}
\newcommand{\Dnat}{D_N^{\natural}}

\newcommand{\eps}{\varepsilon}

\title[Boundary layers for maximum-area small polygons]
{Boundary Layers and Sharp Asymptotics for Maximum-Area Small Polygons}

\ifdefined\anonymoussubmission
  \author{}
\else
  \author{Dawid Trela}
  \address{Faculty of Law and Administration, War Studies University, Warsaw, Poland}
  \email{dawidmtrela@gmail.com}
  \thanks{ORCID: 0000-0001-9781-6425.}
\fi

\subjclass[2020]{Primary 52A40; Secondary 52A38, 37D10, 41A60}
\keywords{small polygon, isodiametric problem, boundary layer, stable manifold,
variational recurrence, asymptotic expansion}

\begin{document}

\begin{abstract}
A small polygon is a planar polygon of diameter at most one; let $A_n$
be the largest area at order $n$.  Using the global characterization of the
even-order maximizers established in a companion paper, we determine their
asymptotic geometry.  After scaling the angular deficits near the unique
pendant diameter, the exact critical equations converge to an autonomous
second-order recurrence.  Its marked boundary condition selects a unique
positive half-line orbit, equivalently the unique minimizer of an explicit
strictly convex action.  The orbit approaches the regular state with stable
multiplier $(-3+\sqrt5)/2$, producing an alternating, exponentially damped
boundary layer.  A uniform finite-cycle shadowing theorem transfers this
profile to the true maximizers.  For every fixed depth, an excursion-clipping
argument proves that the positive variational finite section is the unique
global minimizer on the limiting geometric domain of the
Bingane--Mossinghoff construction; this conclusion is expressly distinct
from minimization on a looser algebraic box.  The sections converge sharply,
with two-step error ratio $|(-3+\sqrt5)/2|^4$.  The limiting constant $q_*$
has an exact variational definition and a certified rational enclosure.
For even $n\to\infty$,
\[
 A_n=\frac\pi4-\frac{5\pi^3}{48n^2}
          -\frac{q_*\pi^3}{n^3}+O(n^{-4}).
\]
We also identify the leading gap from the Foster--Szab\'o upper bound and
prove that $A_n$ is represented, up to an exponentially small error, by a
real-analytic function of $1/n$.
\end{abstract}

\maketitle

\section{Introduction and main results}
\label{sec:introduction}

A planar polygon is called \emph{small} if the distance between any two of
its vertices is at most one.  Write $A_n$ for the largest possible area of a
small $n$-gon.  Reinhardt proved that at odd order the regular polygon is
optimal~\cite{Reinhardt1922}.  At even order the regular polygon is not
optimal, and the problem has led to exact solutions at small orders,
structural upper bounds, and increasingly sharp constructions
\cite{Graham1975,AudetEtAl2002,Mossinghoff2006,FosterSzabo2007,
HenrionMessine2013,BinganeSequential2023,BinganeImproved2023,
BinganeMossinghoff2024}.

The even-order maximizer has one distinguished pendant diameter.  That
defect disappears in the leading isodiametric limit, but it generates a
localized correction to the nearly regular bulk.  The purpose of this paper
is to identify that boundary layer, to transfer it uniformly to the true
finite maximizers, and to extract the resulting sharp area asymptotics.  The
only global optimization input is the all-even theorem from the companion
paper~\cite{TrelaGeneralEven2026}, stated precisely in
Theorem~\ref{thm:imported-even}.  We do not reproduce its global reduction or
strict-concavity proof here.

Put
\begin{align}
 b(x)&=\frac{x^3}{3}-4x+\frac{11}{3},                                      \label{eq:b-def}\\
 \psi(x,y)&=\frac{(x+y)^3}{3}+\frac{2x^3}{3}-6x-4y+\frac{20}{3}.            \label{eq:psi-def}
\end{align}
The limiting marked orbit is the positive sequence $w^*=(w_j^*)_{j\ge0}$
satisfying
\begin{align}
 3(w_0^*)^2+(w_0^*+w_1^*)^2&=10,                                           \label{eq:intro-B}\\
 (w_{j-1}^*+w_j^*)^2+(w_j^*+w_{j+1}^*)^2+2(w_j^*)^2&=10
       \quad(j\ge1),                                                       \label{eq:intro-E}\\
 w_j^*&\longrightarrow1.                                                   \label{eq:intro-tail-state}
\end{align}

\begin{theorem}[Universal boundary layer]
\label{thm:intro-boundary-layer}
There is exactly one positive solution of
\eqref{eq:intro-B}--\eqref{eq:intro-tail-state}.  It is the unique minimizer
of
\begin{equation}
 \J(w)=b(w_0)+\sum_{j\ge0}\psi(w_j,w_{j+1})                                \label{eq:intro-J}
\end{equation}
on $\{w\colon w_j\ge0,\ w-1\in\ell^2(\N_0)\}$.  Moreover,
\begin{equation}
 w_j^*-1=c_*\lambda_s^j+O(|\lambda_s|^{2j}),\qquad
 \lambda_s=\frac{-3+\sqrt5}{2},\qquad c_*\ne0.                            \label{eq:intro-sharp-tail}
\end{equation}
\end{theorem}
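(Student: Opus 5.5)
The plan is to read \eqref{eq:intro-B}--\eqref{eq:intro-E} as the Euler--Lagrange system of $\J$, and then use three ingredients: convexity of $\J$ for uniqueness, the direct method for existence, and hyperbolic dynamics at the fixed point $w\equiv1$ for the tail. First the bookkeeping. From \eqref{eq:psi-def} we have $\psi_x(x,y)=(x+y)^2+2x^2-6$, $\psi_y(x,y)=(x+y)^2-4$, and $b'(x)=x^2-4$. Hence $\partial\J/\partial w_0=3w_0^2+(w_0+w_1)^2-10$, and for $j\ge1$ the quantity $\partial\J/\partial w_j$ is the left side of \eqref{eq:intro-E} minus $10$. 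Also $\psi(1,1)=b(1)=0$ and $\nabla\psi(1,1)=0$, so $\J$ is finite on the admissible set. The Hessian of $\psi$ is
\[
\begin{pmatrix}2(x+y)+4x&2(x+y)\\ 2(x+y)&2(x+y)\end{pmatrix},
\]
which is positive semidefinite on the closed quadrant with determinant $8x(x+y)$, and $b''=2x\ge0$. Consequently the second variation of $\J$ is
\[
\textstyle\sum_j\bigl[2(w_j+w_{j+1})(h_j+h_{j+1})^2+4w_jh_j^2\bigr]+2w_0h_0^2 .
\]
It is strictly positive for $h\ne0$ whenever all $w_j>0$. Since $\psi\ge0$ on the quadrant, being convex with a critical point at $(1,1)$, $\J$ is convex on $\{w\ge0\}$ and strictly convex along segments through positive points.

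\emph{Uniqueness and the minimizer characterization.} Let $w$ be any positive solution with $w_j\to1$. I first show $w-1\in\ell^2$ via the tail analysis below, which gives exponential decay. Then $w$ is a critical point of the convex functional $\J$, with the directional derivative along $v-w$ computed termwise, legitimately because the terms are summable. Hence $w$ minimizes $\J$ over the admissible set. Strict convexity along segments from a positive point then shows that there is only one minimizer, and hence at most one positive solution.

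\emph{Existence.} I would apply the direct method on the affine space $1+\ell^2$. Lower bounds come from $\psi\ge0$, the quadratic lower bound $\psi(1+h,1+k)\ge c(h^2+k^2)$ near $(1,1)$ with cubic growth away from it, and the fact that $b$ is bounded below on $[0,\infty)$ and grows cubically. Together these make sublevel sets bounded in $\ell^2$. Weak lower semicontinuity follows by Fatou, since all terms are nonnegative, except the single term $b(w_0)$, which converges coordinatewise. This produces a minimizer $w^*\ge0$, which satisfies KKT conditions: equality in \eqref{eq:intro-B}/\eqref{eq:intro-E} where $w_j^*>0$, and $\partial_j\J\ge0$ where $w_j^*=0$.

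The main obstacle is to rule out $w_j^*=0$. My first attempt is an a priori upper bound: truncating $w\mapsto\min(w,M)$ lowers $\J$ for a suitable explicit $M$, because $\psi$ and $b$ are increasing in each variable beyond moderate values. With all neighbours bounded by $M$, I then check the sign condition at a zero. At $w_j=0$ the derivative $w_{j-1}^2+w_{j+1}^2-10$ must be $\ge0$. I would aim to contradict this either by a sharper bound $w\le\sqrt5$ from a comparison argument, or by a local perturbation of the pair $(w_{j-1},w_j)$ that strictly decreases $\J$. The cases $j=0$ and $j\ge1$ need to be treated separately.

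\emph{Tail.} Write the recurrence \eqref{eq:intro-E} as the analytic map $(w_{j-1},w_j)\mapsto(w_j,w_{j+1})$, solving for $w_{j+1}>0$. Its linearization at $(1,1)$ is $h_{j-1}+3h_j+h_{j+1}=0$, with multipliers $\lambda_s=(-3+\sqrt5)/2$ and $1/\lambda_s$, so the fixed point is a hyperbolic saddle. An orbit converging to $(1,1)$ lies on the one-dimensional analytic stable manifold. The conjugacy of the restricted map to $t\mapsto\lambda_s t$, which is possible because $|\lambda_s|<1$ is nonresonant in one dimension, gives $w_j^*-1=c_*\lambda_s^j+O(|\lambda_s|^{2j})$. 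Finally, if $c_*=0$, local uniqueness on the stable manifold forces $w^*\equiv1$. This is impossible, because \eqref{eq:intro-B} would then give $3+4=7\ne10$.
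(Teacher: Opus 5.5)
Your uniqueness and tail arguments match the paper's (Theorem~\ref{thm:stable-orbit} and Proposition~\ref{prop:transversality}). The stable manifold gives $w-1\in\ell^2$. A positive critical point of the convex functional is then the global minimizer, and strict convexity along segments from it gives uniqueness. Koenigs linearization on the analytic stable manifold, together with $3+4=7\ne10$, gives the tail with $c_*\neq0$.

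The one genuine hole is in existence, at exactly the step you flag: you never actually exclude zero coordinates of the direct-method minimizer. The route you lean toward, a bound $w\le\sqrt5$, only gives $w_{j-1}^2+w_{j+1}^2\le10$. That does not by itself contradict the KKT inequality $w_{j-1}^2+w_{j+1}^2\ge10$.

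The missing observation is that the KKT \emph{equalities} already bound the minimizer. At any site $k$ with $w_k>0$, nonnegativity of the neighbours in \eqref{eq:intro-E} (or in \eqref{eq:intro-B} when $k=0$) gives $4w_k^2\le10$. So every coordinate is at most $\sqrt{5/2}$. At a zero site $j\ge1$ this gives $\partial_j\J=w_{j-1}^2+w_{j+1}^2-10\le-5<0$. At $j=0$ it gives $\partial_0\J=w_1^2-10<0$. Both contradict the KKT sign condition.

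Your truncation idea also closes the gap if you take $M=2$. For $x\ge2$ one has $\psi_x\ge3x^2-6>0$ and $b'(x)\ge0$, and for $y\ge2$ one has $\psi_y=(x+y)^2-4\ge0$. Hence every minimizer satisfies $w\le2$, and $w_{j-1}^2+w_{j+1}^2\le8<10$. With this short step added, the minimizer is positive, satisfies \eqref{eq:intro-B}--\eqref{eq:intro-E}, and tends to one because $w-1\in\ell^2$.

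Once completed, your existence route is genuinely different from the paper's. The paper (Lemma~\ref{lem:halfline-existence}) never touches the boundary of the positive cone:
\begin{itemize}
\item it minimizes the Dirichlet sections $\J_L$ on the compact box $K^{L+1}$, with $K=[1/2,\sqrt{5/2}]$;
\item it proves the minimizers are interior by an inward-pointing derivative test on the faces;
\item it gets a uniform $\ell^2$ bound from the coercivity \eqref{eq:psi-coercive} on $K^2$, and extracts a diagonal limit.
\end{itemize}
Global minimality on $\Hh$ comes only afterwards, from convexity. Your direct method on $\Hh$ yields the global minimizer at once. The price is an $\ell^2$-boundedness estimate for sublevel sets on the whole quadrant, for which convexity plus the positive-definite Hessian at $(1,1)$ suffices, plus the KKT exclusion above. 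The paper's route instead keeps everything inside the uniformly strongly convex region and produces the finite sections it reuses later.
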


Thus the relaxation toward the regular bulk alternates.  The algebraic
number $\lambda_s$ is the stable eigenvalue of the limiting recurrence; its
reciprocal is the unstable eigenvalue.

For $n=2m$ set $N=n-1$.  Let $\eta_j^{(n)}$ denote the central angular
deficits along the marked half-cycle of the unique maximizing polygon.  Put
\begin{equation}
 M_*:=2\sum_{j\ge0}(w_j^*-1).                                               \label{eq:intro-Mstar}
\end{equation}

\begin{theorem}[Shape of the true maximizers]
\label{thm:intro-shape}
Uniformly for $0\le j\le m-1$,
\begin{equation}
 \eta_j^{(n)}=\frac{\pi w_j^*}{N+M_*}
 +O\left(n^{-3}+\frac{2^{-n/2}}{n}\right).                                \label{eq:intro-shape}
\end{equation}
Consequently,
\begin{equation}
 \eta_j^{(n)}-\frac{\pi}{N+M_*}
 =\frac{\pi c_*}{N+M_*}\lambda_s^j
 +O\left(\frac{|\lambda_s|^{2j}}n+n^{-3}+\frac{2^{-n/2}}n\right).         \label{eq:intro-shape-tail}
\end{equation}
\end{theorem}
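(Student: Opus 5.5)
We plan to prove Theorem~\ref{thm:intro-shape} by a finite-cycle shadowing argument. We start from the exact critical equations for the unique maximizer supplied by Theorem~\ref{thm:imported-even}. After rescaling the deficits as $\eta_j^{(n)}=\pi u_j/(N+M)$, for a normalization constant $M$ to be fixed, these equations become a perturbation of the limiting recurrence \eqref{eq:intro-B}--\eqref{eq:intro-E}. Expanding the sines and cosines of the small angles, the relative perturbation is $O(n^{-2})$ uniformly in $j$.

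The marked half-cycle has length $m$. At its far end the equations close, by the symmetry of the maximizer, with a reflection condition. The angle-sum constraint $\sum\eta=\pi$ (or its analogue) fixes the normalization. Substituting $u=w^*+v$ turns the sum constraint into $\sum_j(w^*_j-1)$ plus the bulk count, which is exactly why $N+M_*$ appears. We would take the definition \eqref{eq:intro-Mstar} of $M_*$ as the choice making the normalization consistent at leading order.

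The core step is a uniform invertibility estimate for the linearization $L_m$ of the finite-section recurrence at the truncated orbit $(w^*_0,\dots,w^*_{m-1})$, with the boundary condition \eqref{eq:intro-B} at $j=0$ and the symmetric closure at $j=m-1$. We would obtain it from the strict convexity of $\J$ established in Theorem~\ref{thm:intro-boundary-layer}: $L_m$ is the Hessian of the finite action plus a boundary term, so it is coercive on $\ell^2$ with a constant independent of $m$, and its inverse is bounded in $\ell^\infty$ as well. Because the tail is hyperbolic with multipliers $\lambda_s$ and $\lambda_s^{-1}$, the inverse has kernel decaying exponentially off the diagonal. This kernel bound converts $\ell^2$ coercivity into uniform $\ell^\infty$ control.

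Next we would estimate the residual of $w^*$ in the finite system, which has three sources.
\begin{itemize}
\item The $O(n^{-2})$ trigonometric corrections in the equations. After the normalization by $N+M_*$ they contribute $O(n^{-3})$ in $\eta$; one must check that the $O(n^{-2})$ correction to $u$ is a uniform shift absorbed by the normalization rather than a profile error.
\item The mismatch at the far end. Since $w^*_j-1=O(|\lambda_s|^j)$ with $|\lambda_s|^{m}\approx 0.38^{n/2}\le 2^{-n/2}$, this contributes $O(2^{-n/2})$ in $u$ and hence $O(2^{-n/2}/n)$ in $\eta$.
\item The error in the normalization constant.
\end{itemize}
A Newton--Kantorovich (contraction) argument with the uniform bound on $L_m^{-1}$ then yields a finite-section solution within these errors of $w^*$. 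By the uniqueness part of Theorem~\ref{thm:imported-even}, this solution is the maximizer. This gives \eqref{eq:intro-shape}.

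Estimate \eqref{eq:intro-shape-tail} then follows by inserting the expansion \eqref{eq:intro-sharp-tail}: the remainder $O(|\lambda_s|^{2j})$ is multiplied by $\pi/(N+M_*)=O(1/n)$.

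The main obstacle is the uniform-in-$m$ invertibility of $L_m$ together with the treatment of the normalization. The sum constraint couples all indices, and a naive bound gives an error of size $m\cdot n^{-3}$. The first thing to try is to solve for $M$ as an extra unknown in an augmented Newton system, so that the global constraint becomes a scalar equation whose derivative, $\approx -m$ relative to the $u$-block, is nondegenerate. The rank-one coupling can then be handled through a Schur complement, which keeps the $u$-error uniform.
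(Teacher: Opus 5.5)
Your plan is correct and has the same skeleton as the paper: a uniform-in-$m$ inverse of a coercive tridiagonal linearization with exponential off-diagonal decay, a residual made of $O(g^2)$ corrections plus a reflected-endpoint mismatch, a Newton contraction, identification with the maximizer by the imported uniqueness, and then turning. Three steps are organized differently.

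\textbf{Normalization.} The paper never puts turning into the Newton system. It fixes $C=C_g$, a point your formulation leaves unspecified since you never say how $C$ relates to $M$. This choice makes the homogeneous state an exact solution of every ordinary row for all $g$, so the bulk correction $g^2H_i$ vanishes at the regular state. The paper then solves the $m$ radial rows at fixed $g$ in the growing-weight norm $\|\cdot\|_{1,4/5}$, where the residual is $O(g^2+2^{-m})$ rather than $O(mg^2)$. Only afterwards does it use $g\sum_iw_i=\pi$ as a scalar equation, obtaining $g_n=\pi/(N+M_*)+O(N^{-4})$. This settles your ``uniform shift versus profile error'' question and makes the augmented system and Schur complement unnecessary.

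\textbf{Your ``main obstacle''.} It is not actually an obstacle, even in your $\ell^\infty$ setting. A uniform profile error $\epsilon=O(g^2+|\lambda_s|^m)$ moves $\sum_iw_i$ by $O(N\epsilon)$, hence moves $g$ by $O(\epsilon/N)$. That is exactly the error allowed in \eqref{eq:intro-shape}. The paper's weighted control buys the sharper turning estimate, which it uses later for the area and the master expansion.

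\textbf{The bridge.} The paper applies fixed-$C$ strict concavity at the maximizer's own $C$. That is why it first needs localization: $C\uparrow1/4$, Lemma~\ref{lem:y-nonzero}, and $g=\Theta(N^{-1})$. Your augmented system instead produces a full $(C,u)$ critical point. You can therefore invoke the clause that the maximizer's data are the unique critical point of $\Phi_N$ on $\Dnat$, which removes the need for localization entirely. If you do, you must state explicitly that the reflected solution satisfies $\partial_C\Phi_N=0$ (turning together with the matching equations) and lies in $\Dnat$. Uniqueness of the maximizer alone would not suffice, because your constructed point is a priori only critical.

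\textbf{Coercivity.} Uniform coercivity of $L_m$ does not follow from strict convexity of $\J$ alone. It comes from strong convexity on $K^2$, the orbit bounds of Lemma~\ref{lem:stable-bounds}, and convexity of the reflected terminal term.
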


The finite constructions of Bingane and Mossinghoff contain rapidly
stabilizing cubic coefficients~\cite{BinganeMossinghoff2024}.  In
Section~\ref{sec:finite-sections} we define their true limiting geometric
domain $\mathcal D_r^{\rm geom}$ and explicit positive variational sections
$\widehat q_r$.  An exact triangular transformation identifies the cubic
with $\J_R$, while an excursion-clipping lemma forces every geometric
minimizer into the nonnegative strict-convexity region.  Consequently
\[
 q_r^{\rm geom}=\widehat q_r
\]
for every $r$ and both parity conventions.  This geometric globality is
strictly weaker than, and does not imply, globality on a looser algebraic
coefficient box.

\begin{theorem}[Sharp geometric finite sections]
\label{thm:intro-finite-sections}
For every $r\ge1$, $q_r^{\rm geom}=\widehat q_r$.  The two parity
subsequences decrease strictly to $q_*$.  As $r\to\infty$,
\begin{align}
 q_r^{\rm geom}-q_*
 &=\frac{\sqrt5}{8}c_*^2|\lambda_s|^{2r}(1+o(1))
 &&(r\ \text{even}),                                                       \label{eq:intro-rate-even}\\
 q_r^{\rm geom}-q_*
 &=\frac{-5+3\sqrt5}{16}c_*^2|\lambda_s|^{2r}(1+o(1))
 &&(r\ \text{odd}).                                                        \label{eq:intro-rate-odd}
\end{align}
In either parity class,
\begin{equation}
 \frac{q_{r+2}^{\rm geom}-q_*}{q_r^{\rm geom}-q_*}
 \longrightarrow |\lambda_s|^4.                                           \label{eq:intro-ratio}
\end{equation}
\end{theorem}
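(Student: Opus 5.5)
The proof will have three parts. First, the identification $q_r^{\rm geom}=\widehat q_r$. Second, strict monotone decrease to $q_*$ along each parity. Third, the sharp rate, which I expect to come from a quadratic expansion of the action around the orbit $w^*$ of Theorem~\ref{thm:intro-boundary-layer}.

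\textbf{Step 1: globality on the geometric domain.} Apply the triangular change of variables announced before the statement. It rewrites the Bingane--Mossinghoff cubic as a finite-depth action $\J_R$: the boundary term $b(w_0)$, plus $\sum_{j<R}\psi(w_j,w_{j+1})$, plus a parity-dependent terminal term encoding how the section closes onto the regular bulk.

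Next, the excursion-clipping lemma. Take a geometric minimizer with some coordinate outside $[0,\infty)$ or outside the convexity region. Replace each excursion by its clipped value, keeping it in $\mathcal D_r^{\rm geom}$. Then check directly from \eqref{eq:b-def}--\eqref{eq:psi-def} that $b$ and $\psi$ do not increase under clipping. This uses monotonicity of $\psi$ in each argument on the relevant ranges: for instance $\partial_y\psi=(x+y)^2-4$, so on the relevant ranges it is negative below $x+y=2$.

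On the nonnegative region the Hessian of $\J_R$ is positive definite. Hence the critical point is unique, and it is by definition the positive variational section $\widehat q_r$.

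\textbf{Step 2: monotonicity and convergence.} Extend a depth-$r$ minimizer to depth $r+2$ by appending coordinates equal to $1$. This gives a competitor with the same value, because $\psi(1,1)$ terms are normalized to zero by the subtraction built into $q$. So $q_{r+2}\le q_r$.

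Strictness: the appended point satisfies the finite section's Euler--Lagrange equation only if $w_{r}=1$ exactly. That is impossible, since the depth-$r$ minimizer inherits the alternating sign of $c_*\lambda_s^r$.

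Convergence $q_r\to q_*$ follows from $\Gamma$-convergence of $\J_R$ to $\J$ on $\{w-1\in\ell^2\}$. Use strict convexity and the unique minimizer of Theorem~\ref{thm:intro-boundary-layer}, together with truncations of $w^*$ as recovery sequences.

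\textbf{Step 3: sharp rate.} This is the main obstacle. Write the depth-$r$ minimizer as $w^{(r)}=w^*+\delta$. Linearize the recurrence \eqref{eq:intro-E} about $1$: $\delta_{j-1}+6\delta_j+\delta_{j+1}\approx0$, i.e.\ $\lambda^2+3\lambda+1=0$, whose roots are $\lambda_s,\lambda_s^{-1}$.

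Near the truncation point $r$, the perturbation $\delta$ is a combination of the stable and unstable modes. Its amplitude is fixed by the terminal condition acting on the defect $w^*_r-1\sim c_*\lambda_s^r$.

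Then $q_r-q_*$ equals the second-order action of this corrected tail, minus the discarded tail of $\J(w^*)$. Both are quadratic forms in $c_*\lambda_s^r$, plus errors of order $|\lambda_s|^{3r}$ coming from the $O(|\lambda_s|^{2j})$ term in \eqref{eq:intro-sharp-tail}.

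Computing this quadratic form amounts to solving a two-term boundary problem on a half-line for the linearized Jacobi operator, with the parity-dependent terminal term. It should give the explicit constants $\sqrt5/8$ and $(-5+3\sqrt5)/16$, with $\sqrt5$ entering through $\lambda_s^{-1}-\lambda_s=\sqrt5$. The sign of $\lambda_s^r$ disappears because the answer is quadratic, and the parity enters only through the terminal term.

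The ratio \eqref{eq:intro-ratio} is then immediate from \eqref{eq:intro-rate-even}--\eqref{eq:intro-rate-odd}, since $c_*\ne0$. The delicate part is carrying out the boundary computation exactly and controlling the nonlinear remainder uniformly in $r$. For the latter I would invoke the finite-cycle shadowing estimate used for Theorem~\ref{thm:intro-shape}.
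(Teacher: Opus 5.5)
\textbf{Verdict.} Your Steps~2 and~3 follow the paper's route: extension by regular sites, then the exact cubic expansion of $\J$ about $w^*$, resolved by a terminal-layer profile. Step~1, however, has a genuine gap, because the clipping argument you sketch is not valid.

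\textbf{Why the clipping in Step~1 fails.} The constraints live on $(a,B_i,c_i)$, that is, on the paired variables $e_i=w_{2i}$, $B_i=\tfrac14(w_{2i-2}+2w_{2i-1}+w_{2i})$ and $c_i=\tfrac14(w_{2i-2}-w_{2i})$. Moving a single $w_j$ therefore changes the neighbouring $B_i,c_i$, and the result need not stay feasible. You also have no description of $\mathcal D_r^{\rm geom}$ (a set of scaled limits of exact constructions, possibly nonconvex) that would certify that a clipped point stays in it.

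More seriously, $\psi$ is not monotone on the ranges that actually occur. On the outer polytope $\mathcal P_r$, the odd coordinates $o_i=2B_i-\tfrac12(e_{i-1}+e_i)$ are automatically $\ge0$, since $B_i\ge1$ and all evens are $\le2$. So the only excursions are negative even coordinates, and next to those the odd coordinates can be large. For example, $e_0=1$, $e_1=e_2=e_3=-3$, $e_4=1$, $B_2=B_3=2$ lies in $\mathcal P_8$ and gives $o_2=o_3=7$. There $\partial_y\psi(7,-3)=12>0$ and $\partial_x\psi(-3,7)=28>0$, so both terms containing $e_2$ \emph{increase} when $e_2$ is raised with its neighbours fixed.

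Remark~\ref{rem:loose-box} shows that once $a\ge\tfrac12$ and $B_i\ge1$ are dropped, the cubic falls far below $\widehat q_r$. Any valid exclusion must therefore use these constraints quantitatively. Lemma~\ref{lem:clipping} does exactly this. It writes $\J_R=b(e_0)+\sum_iL_{B_i}(e_{i-1},e_i)$ and replaces everything after the first even index with $e_k<1$ by $e_k=\dots=e_s=1$ and $B_{k+1}=\dots=B_s=1$. This replacement is feasible in $\mathcal P_r$ and compatible with $B_s=1$ in the odd case. Strict improvement then follows from the explicit nonnegative polynomial identities \eqref{eq:clip-1}--\eqref{eq:clip-4}, whose $P(u)$ terms telescope.

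\textbf{Further gaps in Step~1.}
\begin{enumerate}
\item Since $\widehat w^{(r)}$ is defined as the minimizer on the box $K^{R+1}$ (with the odd constraint), you must show it is a relative-interior critical point of the clipped domain. The paper obtains this from the checkerboard ordering \eqref{eq:checkerboard} and the bounds $1\le w_{2j}<\sqrt{5/2}$ and $w_{2j+1}>\tfrac12$. These give $\tfrac12<a<1$, $1<B_i<2$ and $0<c_i<1$.
\item To get equality rather than only $q_r^{\rm geom}\ge\widehat q_r$, you need $\widehat w^{(r)}\in\mathcal D_r^{\rm geom}$. The paper proves this by lifting the profile to exact Bingane--Mossinghoff points with the implicit function theorem on the branch \eqref{eq:BM-gamma-branch}. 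The conclusion is then the sandwich $\min_{\mathcal P_r}Q_R\le\inf_{\mathcal D_r^{\rm geom}}Q_R\le Q_R(\widehat w^{(r)})=\min_{\mathcal P_r}Q_R$, which never modifies points inside the geometric domain.
\end{enumerate}

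\textbf{Step~2.} Strictness cannot rest on the asymptotic sign of $c_*\lambda_s^r$, which gives nothing for small $r$. The correct argument is as follows. Stationarity of the extended vector in the $(r+2)$-problem forces the last nonregular site of the old section to equal one; in the even case $w_R=1$ is already prescribed, so the relevant site is $w_{R-1}$. Backward uniqueness of the positive recurrence then yields the regular vector, which contradicts $3+4\ne10$ at the pendant end.

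\textbf{Step~3.}
\begin{enumerate}
\item Parity does not enter through a terminal term. It enters through the affine constraint $w_{R-2}+2w_{R-1}=3$ at depth $R=r+1$, coming from $B_s=1$. This constraint produces the odd profile \eqref{eq:E-odd} with $D=-9+4\sqrt5$. After converting $\lambda_s^{2(r-1)}$ to $\lambda_s^{2r}$, it gives the constant $(-5+3\sqrt5)/16$.
\item The uniform control you need is the Demko bound \eqref{eq:finite-Demko} for the finite-section Hessians, combined with \eqref{eq:finite-MV}. The reflected shadowing estimate of Section~\ref{sec:shadowing-shape} concerns a different boundary-value problem.
\item Your linearization should read $\delta_{j-1}+3\delta_j+\delta_{j+1}=0$.
\end{enumerate}
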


The limiting coefficient has the exact definition
\begin{equation}
 \boxed{q_*:=\frac7{48}+\frac1{16}\J(w^*).}                               \label{eq:intro-qstar}
\end{equation}
A rational certificate gives
\begin{equation}
\resizebox{0.98\textwidth}{!}{$
0.1150549835233248474484387814301602327073195692204367998530110
<q_*<
0.1150549835233248474484387815240548332602930857594706272107345.
$}                                                                         \label{eq:intro-qstar-box}
\end{equation}

\begin{theorem}[Sharp area asymptotics]
\label{thm:intro-area}
As $n\to\infty$ through the even integers,
\begin{equation}
 \boxed{
 A_n=\frac\pi4-\frac{5\pi^3}{48n^2}
          -\frac{q_*\pi^3}{n^3}+O(n^{-4}).}                                \label{eq:intro-area}
\end{equation}
If
\begin{equation}
 \overline A_n=\frac n2\sin\frac\pi n
 -\frac{n-1}{2}\tan\frac{\pi}{2n-2}                                      \label{eq:intro-FS}
\end{equation}
is the Foster--Szab\'o upper bound, then
\begin{equation}
 \overline A_n-A_n=\left(q_*-\frac1{24}\right)\frac{\pi^3}{n^3}
 +O(n^{-4}).                                                               \label{eq:intro-gap}
\end{equation}
\end{theorem}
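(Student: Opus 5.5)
We prove the area expansion by feeding the shape asymptotics of Theorem~\ref{thm:intro-shape} into the exact area functional of the even-order maximizer. From the companion theorem (Theorem~\ref{thm:imported-even}), the maximizer with $n=2m$ is a self-Reuleaux-type configuration with one pendant diameter: the central angles along the marked half-cycle are $\eta_0,\dots,\eta_{m-1}$, tied by the angular constraint, and the area is an explicit sum of half-sine terms. The first step is to write the area as
\[
A_n=\tfrac12\sum_k \sin\theta_k(\eta)-(\text{pendant correction}).
\]
Here the $\theta_k$ are linear combinations of consecutive deficits, as in the coefficients of $\psi$, and the linear angular constraint is $\sum_j \eta_j\cdot(\text{weights})=\pi$.

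Next we Taylor expand around the uniform state $\eta_j\equiv\pi/N$, where $N=n-1$. We substitute $\eta_j=\pi w_j/(N+M)$, with $M$ fixed by the angular constraint. The sine expansion $\sin x=x-x^3/6+O(x^5)$ turns the linear terms into $\pi/4$ exactly, via the constraint. The cubic terms give $-\pi^3/N^2$ times a bulk sum. The bulk sum is $(N-2r)\cdot(\text{regular value})$ plus the boundary-layer action $b(w_0)+\sum\psi(w_j,w_{j+1})$ measured relative to the regular value $\psi(1,1)$. A direct check shows $b(1)=0$, $\psi(1,1)=0$, and both functions have matching normalizations, which is exactly why $\J$ appears.

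Collecting terms, the bulk produces $\frac{\pi}{4}-\frac{5\pi^3}{48n^2}$ after re-expanding $1/N^2$ and $1/(N+M_*)^2$ in $1/n$. This re-expansion shifts the $n^{-3}$ coefficient by a rational multiple: $7/48$ comes from $N=n-1$ and the pendant edge. The boundary layer contributes $\frac{\pi^3}{16n^3}\J(w^*)$, which gives $q_*$ as in \eqref{eq:intro-qstar}. The quintic Taylor terms and the $O(n^{-3})$ shape error enter only at order $n^{-4}$.

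The main obstacle is controlling the error from the shape approximation. The key fact is that the maximizer is a critical point. The exact $w^{(n)}$ (rescaled $\eta^{(n)}$) minimizes the finite-$n$ action on the constraint surface, so a perturbation of size $\delta$ in $w$ changes the action only by $O(\delta^2)$. By \eqref{eq:intro-shape}, $\delta=O(n^{-2})$ in sup norm. The action weight is $n^{-2}$ per site, so summing the squared error over $m$ sites gives $O(n\cdot n^{-4})$, which is well within $O(n^{-4})$ after multiplying by the $n^{-2}$ prefactor. The linear part vanishes exactly by the Lagrange condition.

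We also need the boundary-layer sum $\sum_j(w_j^{(n)}-1)$ to be summable uniformly. The exponential decay $|\lambda_s|^j$ from \eqref{eq:intro-sharp-tail}, transferred by the shadowing bound, gives this, and the far end of the half-cycle (near $j=m$) is matched by symmetry. Alternatively, we bypass the shape error by a two-sided comparison. For the lower bound, evaluate the area at the explicit test polygon built from the first $r$ entries of $w^*$ padded by $1$'s; this gives $q_*+O(|\lambda_s|^{2r})$, and we choose $r\sim\log n$. For the upper bound, use the quadratic growth of $\J$ around its minimizer.

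Finally, \eqref{eq:intro-gap} is a routine expansion. Expanding \eqref{eq:intro-FS} gives $\overline A_n=\frac{\pi}{4}-\frac{5\pi^3}{48n^2}-\frac{\pi^3}{24n^3}+O(n^{-4})$. To see this, note $\frac n2\sin\frac\pi n=\frac\pi2-\frac{\pi^3}{12n^2}+\dots$, and $\frac{n-1}{2}\tan\frac{\pi}{2n-2}=\frac\pi4+\frac{\pi^3}{48(n-1)^2}+\dots$. Re-expanding $(n-1)^{-2}=n^{-2}(1+2/n+\dots)$ then gives the $n^{-3}$ coefficient $-\pi^3/24$. Subtracting \eqref{eq:intro-area} yields the stated gap.
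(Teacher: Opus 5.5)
There is a genuine gap at the step that is supposed to produce $q_*$. Suppose you carry out your plan on the exact reconstruction $A_n=\frac12\sum r_ir_{i-2}\sin(\eta_{i-2}+\eta_{i-1})+p(\theta)$. Turning does remove the linear terms exactly. But the $n^{-3}$ coefficient that comes out is
\[
 -\Bigl[K_*+\frac{(w_0^*)^3}{16}+\frac{5M_*}{16}-\frac5{24}\Bigr].
\]
The three pieces come from different places:
\begin{itemize}
\item $K_*$ is the summed defect of the cubic density $k_i^*=\frac{W_i^*}{4}(U_i^*+U_{i-2}^*)-\frac{(W_i^*)^3}{48}$;
\item $(w_0^*)^3/16$ comes from $p(\theta)$;
\item $\frac{5M_*}{16}-\frac5{24}$ comes from re-expanding $-\frac{5\pi^3}{48}\,\frac{N}{(N+M_*)^3}$ with $N=n-1$.
\end{itemize}
So the re-expansion is not a rational shift, the pendant contribution is not rational, and $7/48$ does not arise from $N=n-1$ and the pendant edge.

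That this bracket equals $-\bigl(\frac7{48}+\frac1{16}\J(w^*)\bigr)$ holds only on-shell. The paper proves it through the exact finite telescoping identity behind Proposition~\ref{prop:qstar-telescoping}. That identity writes $q_{A,L}-q_{J,L}$ as a boundary polynomial vanishing at $w_L=1$, plus a combination of the pendant and bulk Euler residuals $B,R_j$. Inserting $w^*$ kills every residual except an $O((2/5)^L)$ terminal one, and letting $L\to\infty$ gives the equality. Your justification ($b(1)=\psi(1,1)=0$, ``matching normalizations'') only shows that both series converge. Off the Euler equations the two constants differ, so ``count times regular value plus $\J$'' is unproved and in general false. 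Without this identity you have not reached $q_*$ as defined in \eqref{eq:intro-qstar}.

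There are also secondary problems.
\begin{itemize}
\item Your area representation is not exact as written. In \eqref{eq:exact-reconstruction} the radial factors $r_ir_{i-2}=\frac14\bigl(1+2g^2(U_i+U_{i-2})\bigr)+O(g^4)$ enter at exactly the cubic order being computed. At the regular state they change the per-site cubic density from $-\frac16$ to $-\frac5{48}$, so they are needed already for the $n^{-2}$ term. Also, the pendant term $p(\theta)$ is added there, not subtracted.
\item The Lagrange-condition error control would need an area functional in free $w$-coordinates on a feasible manifold containing both the maximizer and a $w^*$-built comparison polygon, together with an off-shell cubic expansion there. You do not supply this, and it is unnecessary. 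Keep the finite $W_{i,n}$ in the linear term, where turning makes it exactly $\pi/4$. Freeze only the cubic density, at cost $g^3\cdot N\cdot O(n^{-2})=O(n^{-4})$ by Theorem~\ref{thm:intro-shape}, and use $g_n=\pi/(N+M_*)+O(N^{-4})$.
\item The two-sided alternative fails as stated. The lower bound needs a construction remainder uniform in $r\sim\log n$, which is not available. ``Quadratic growth of $\J$'' gives no upper bound on the true $A_n$ without the global input and the shadowing argument.
\end{itemize}

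Your Foster--Szab\'o expansion and the resulting gap formula are correct.
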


The asymptotic structure persists to all orders.

\begin{theorem}[Analytic master expansion]
\label{thm:intro-master}
There are $\eps>0$, $\vartheta\in(0,1)$, and a real-analytic function
\begin{equation}
 \mathscr A(t)=\frac\pi4+\sum_{k\ge2}a_kt^k\qquad(|t|<\eps)                \label{eq:intro-master-function}
\end{equation}
such that, for even $n$,
\begin{equation}
 A_n=\mathscr A(1/n)+O(\vartheta^n).                                       \label{eq:intro-master}
\end{equation}
The coefficients are recursively computable, and
$a_2=-5\pi^3/48$, $a_3=-q_*\pi^3$.
\end{theorem}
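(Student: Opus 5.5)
The plan is to write the exact maximal area as a smooth function of $1/n$ together with the finite boundary-layer data, and then show that the dependence on $n$ splits into an analytic part plus an exponentially small part coming from the far end of the half-cycle. By Theorem~\ref{thm:imported-even}, for $n=2m$ the maximizer is unique. It is described by the $m$ deficits $\eta_j^{(n)}$ along the marked half-cycle, which solve the exact critical equations. The area is an explicit analytic function of these deficits: a sum of $\tfrac12\sin$ and $\tan$ terms of the angles, subject to the closing constraint that the angles sum to $\pi$.

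The first step is to rescale as in Theorem~\ref{thm:intro-shape}. Put $\eta_j=\pi w_j/(N+M)$ with $t=1/n$. The critical equations then become $F(w,t)=0$, where $F$ is real-analytic in $(w,t)$ near $(w^*,0)$ in a weighted sequence space, for example $w-1\in\ell^\infty$ with weight $|\lambda_s|^{-j/2}$. At $t=0$ the map $F$ reduces to \eqref{eq:intro-B}--\eqref{eq:intro-E}. Its linearization at $w^*$ is the Hessian of the strictly convex action $\J$, which is invertible on the half-line by Theorem~\ref{thm:intro-boundary-layer} (hyperbolicity with multipliers $\lambda_s,\lambda_s^{-1}$). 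The analytic implicit function theorem in a Banach space then gives an analytic half-line orbit $w(t)$ for $|t|<\eps$, still converging to the $t$-dependent regular state at the rate $\lambda_s(t)^j$.

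The second step is to compare this orbit with the true finite orbit. The true orbit has length $m$ and satisfies the symmetric or closing condition at $j=m-1$ rather than a condition at infinity. Here I would use the uniform finite-cycle shadowing theorem underlying Theorem~\ref{thm:intro-shape}. Its error is controlled by the tail $|\lambda_s|^{m}$, and this is what produces the $2^{-n/2}$ type terms. Because the linearization is hyperbolic, the correction forced by the terminal condition is $O(|\lambda_s|^{m})=O(\vartheta^n)$ for any $\vartheta>|\lambda_s|^{1/2}$ (adjusted if necessary).

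The third step is to substitute into the area. The contribution of the regular bulk, summed over $m$ nearly identical terms, is a geometric-type sum. Using the exponential decay of $w_j(t)-1$, I write it as $m$ times an analytic bulk density plus a convergent analytic boundary sum $\sum_j(\text{excess at }j)$, minus a tail that is again $O(\vartheta^n)$. With $m=1/(2t)$, the term $m\cdot(\text{bulk density})$ is analytic in $t$; the regular-polygon part contributes $\tfrac n2\sin\tfrac\pi n$, which is even in $t$. This defines $\mathscr A(t)$. The coefficient identifications $a_2$ and $a_3$ then follow from Theorem~\ref{thm:intro-area} by uniqueness of asymptotic expansions, and the coefficients $a_k$ are computable by expanding $F(w(t),t)=0$ order by order, with each step inverting the same Hessian.

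The main obstacle is the functional setting for the implicit function theorem. The normalization constant $M(t)$ and the closing constraint couple every index, so $F$ is not local. I would handle this by appending $M$ as an unknown scalar together with the constraint equation, and by checking that the bordered operator is still invertible, using $\sum_j(w_j-1)$ convergence in the weighted space. Care is also needed so that the bulk is analytic in $t$ uniformly in $j$ before the sum over $j$ is taken.
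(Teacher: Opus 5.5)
Your proposal is correct in outline and follows the paper's skeleton: an analytic implicit-function argument for the half-line profile near $w^*$ in a weighted space, an exponentially small comparison of the reflected finite problem with the prefix of that branch, and a bulk-plus-localized splitting of the area, with $a_2,a_3$ read off from Theorem~\ref{thm:intro-area}. The genuine difference is how the turning constraint enters. You take $t=1/n$ as the parameter and border the system with $M$ as an extra unknown. The paper instead parametrizes by $s=g^2$ with $C=C_g$, so the half-line system is purely local; it obtains $w(s)$ and $M(s)=2\sum_j(w_j(s)-1)$ analytically, and only then solves the scalar equation $g[N+M(g^2)]=\pi$ in $z=1/N$. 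Your bordering is harmless: $g=\pi t/(1-t+tM)$ vanishes identically in $M$ at $t=0$, so the bordered Jacobian there is block-triangular with diagonal blocks $D^2\J(w^*)$ and $1$. What the paper's decoupling buys is a finite comparison at equal $g$ with no global closure row; the two turning equations then differ only by an exponentially small scalar.

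Three points should be tightened. First, with $C=C_g$ the regular state is exactly $w\equiv1$ and solves every bulk row for every $g$; this is the cancellation $\mathcal F(s;0,0,0)=0$. That fact, rather than a ``$t$-dependent regular state'', is what lets the nonlinear map act on a decaying weighted space. Second, the shadowing estimate behind Theorem~\ref{thm:intro-shape} compares with $w^*$ and carries an $O(g^2)$ error, so it cannot be cited as is. You must rerun the frozen-inverse contraction against the prefix of $w(t)$ at the same scale, and in a norm where the terminal residual is actually small. If the branch is only known to decay at the rate of a single weight, that residual is $O(1)$ in the same weighted norm. The paper avoids this with two weights, $\omega_-=31/40<\omega_+=4/5$; an unweighted norm, or the sharp rate $|\lambda_s(t)|^j$ proved uniformly for complex $t$, would also work. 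Third, the bulk term is $N\bigl(\frac g4+B(g)\bigr)$ at the self-consistent $g$; it is neither $\frac n2\sin\frac\pi n$ nor even in $t$. Your identification of $a_2,a_3$ by uniqueness of asymptotic expansions does not depend on that remark, so nothing breaks.
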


Sections~\ref{sec:global-input}--\ref{sec:scaled-limit} pass from the exact
finite equations to the limiting action.  Sections~\ref{sec:boundary-layer}
and~\ref{sec:hyperbolicity} prove Theorem~\ref{thm:intro-boundary-layer}.
Section~\ref{sec:shadowing-shape} returns to the true finite maximizers.
Sections~\ref{sec:finite-sections}--\ref{sec:parity-rate} establish the
finite-section laws.  Sections~\ref{sec:qstar}--\ref{sec:gap} derive the
sharp area coefficient and gap, and Section~\ref{sec:master-expansion}
proves the all-orders statement.

\section{The global even-order input}
\label{sec:global-input}

We record the part of the global theorem from~\cite{TrelaGeneralEven2026}
that is used in the sequel.  This formulation fixes the branch, symmetry,
and uniqueness needed to identify the perturbative solution with the true
maximizer.

Let $n\ge8$ be even and set $N=n-1$.  The unit-distance skeleton of the
maximizer consists of an $N$-cycle and one pendant diameter.  Index the cycle
vertices by $i\in\mathbb Z/N\mathbb Z$, let $z_i\in\R^2$ be their vectors
from the Noether center, and put
\[
 u_i=|z_i|^2,\qquad r_i=\sqrt{u_i}.
\]
The Noether triangle $(0,z_i,z_{i+1})$ has unit base, central angle
$\delta_i$, and base angles $\alpha_i$ at $z_i$ and $\beta_i$ at
$z_{i+1}$.  Its central deficit is
\begin{equation}
 \eta_i:=\pi-\delta_i=\alpha_i+\beta_i.                                    \label{eq:def-eta}
\end{equation}
Write $\gamma_i$ for the ordinary skeleton angle at $z_i$ ($i\ne0$), and
write $2\theta$ for the marked angle.

\begin{theorem}[Global even-order theorem; imported]
\label{thm:imported-even}
For every even $n\ge8$, there is exactly one congruence class, modulo
reflection, of maximum-area small $n$-gons.  Its skeleton is the
cycle-with-pendant skeleton described above; all Noether triangles are
positive, nondegenerate, and consistently oriented on the principal branch.

There is a scalar $C$ such that
\begin{equation}
 u_i+C=\frac12\cos\gamma_i\quad(i\ne0),
 \qquad u_0+C=\frac12\cos\theta,                                           \label{eq:noether-relations}
\end{equation}
and the exact critical equations are
\begin{align}
 \gamma_i&=\beta_{i-1}+\alpha_i &&(i\ne0),                                 \label{eq:matching-ordinary}\\
 2\theta&=\beta_{N-1}+\alpha_0,                                            \label{eq:matching-marked}\\
 \sum_{i=0}^{N-1}\eta_i&=\pi.                                             \label{eq:turning}
\end{align}
These data are the unique critical point of a $C^2$ generating action
$\Phi_N$ on its natural domain $\Dnat$, and
\begin{equation}
 D^2\Phi_N(X)\prec0\qquad(X\in\Dnat).                                     \label{eq:Phi-concave}
\end{equation}
In particular, for every fixed $C$, the restriction of $u\mapsto
\Phi_N(C,u)$ to a convex $C$-slice of $\Dnat$ has at most one critical
point.  Finally, the exact reconstruction formula is
\begin{equation}
 A_n=\frac12\sum_{i\in\mathbb Z/N\mathbb Z}z_i\mathbin\times z_{i-2}
       +\sin\theta(1-\cos\theta),                                         \label{eq:exact-reconstruction}
\end{equation}
with the positive orientation fixed below.
\end{theorem}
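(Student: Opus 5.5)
Since Theorem~\ref{thm:imported-even} is imported from~\cite{TrelaGeneralEven2026}, the plan here is only to indicate how its three layers would be established: combinatorial reduction, first-order conditions, and global uniqueness via concavity. \textbf{First}, I would use the classical structure theory of optimal small polygons. Every diameter graph of a maximal small $n$-gon is connected; vertices not on a diameter can be moved to increase area. For even $n$, one then argues as in Graham and Audet et al.~\cite{Graham1975,AudetEtAl2002} that the unit-distance skeleton is an odd cycle of length $N=n-1$ together with a single pendant edge. The competing configurations, meaning several pendants or a longer pendant path, would be excluded by a local area-increasing deformation: move a pendant vertex along its unit circle toward the bisector until it is absorbed into the cycle. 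This fixes the combinatorial type and also the marked vertex $z_0$ carrying the pendant, with half-angle $\theta$. The area is then the sum of the cycle ``star'' terms $\tfrac12 z_i\times z_{i-2}$, plus the isosceles cap $\sin\theta(1-\cos\theta)$ cut by the pendant. This gives \eqref{eq:exact-reconstruction} once an origin is fixed.

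\textbf{Second}, I would choose that origin, the ``Noether center'', so that the constrained problem becomes a Lagrangian one. Parametrize the polygon by the triangles $(0,z_i,z_{i+1})$ with unit base $|z_{i+1}-z_i|=1$. Their angles $\alpha_i,\beta_i,\delta_i$ are then functions of $(r_i,r_{i+1})$. Closure of the cycle is the single turning constraint \eqref{eq:turning}, since the triangles wrap once around the origin. I would maximize area subject to \eqref{eq:turning}, with a multiplier $C$. Translation invariance of the area (Noether) is what lets one place the origin so that the multiplier enters uniformly. Differentiating in each $u_i=r_i^2$ should give \eqref{eq:noether-relations}, using $\partial(\text{area})/\partial u_i \propto \cos\gamma_i$ and the identity for $\partial\eta/\partial u$. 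Matching the angles at each cycle vertex gives \eqref{eq:matching-ordinary}--\eqref{eq:matching-marked}. Packaging these equations as $\nabla\Phi_N=0$ for a generating action $\Phi_N(C,u)$ is then bookkeeping: $\Phi_N$ is the area minus $C$ times the turning defect, expressed in the variables $u$.

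\textbf{Third}, and this is the main obstacle, I would prove \eqref{eq:Phi-concave} on the natural domain $\Dnat$, where all triangles are positive, nondegenerate and consistently oriented. The natural route is to write $D^2\Phi_N$ as a sum of $2\times2$ (or $3\times3$, including $C$) blocks, one per Noether triangle, plus the marked cap. I would then show that each block is negative semidefinite, and that a chain or tridiagonal argument upgrades this to strict definiteness around the odd cycle. The odd length is needed to kill the alternating null direction, and the cap term supplies extra curvature at $i=0$. I expect the per-triangle sign to be delicate near degenerate triangles, where $\alpha_i$ or $\beta_i$ tends to $0$. There I would first try explicit trigonometric inequalities in $(r_i,r_{i+1})$, and fall back on interval certification on a compact core plus asymptotic analysis at the edges.

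With strict concavity on convex $C$-slices, each slice has at most one critical point. A monotonicity argument in $C$, coming from the turning constraint, then yields a unique global critical point. Combined with existence by compactness and the first step, this gives uniqueness of the maximizer up to congruence and reflection.
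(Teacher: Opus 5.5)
The paper gives no proof of this statement. It is imported from \cite{TrelaGeneralEven2026}, and the text explicitly declines to reproduce the global reduction and the strict-concavity proof. Your sketch therefore has to stand on its own, and at the two points where the theorem has real content it does not.

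\emph{The action.} You describe $\Phi_N$ as the area minus $C$ times the turning defect. That function is affine in $C$. Its $\partial_C^2\Phi_N$ entry vanishes, while $\partial_C\partial_{u_j}\Phi_N=-\partial_{u_j}\sum_i\eta_i\neq0$. So every $(C,u_j)$ principal minor is negative, the Hessian is indefinite, and \eqref{eq:Phi-concave} fails whatever block decomposition you use. The action in the statement is a different object. In it, $C$ enters nonlinearly through $V_C(u)=x\cos x-\sin x$ with $\cos x=2(u+C)$, so the Noether relations \eqref{eq:noether-relations} are built into the definition of the skeleton angles. Its Euler equations \eqref{eq:Phi-ui}--\eqref{eq:Phi-C} are then the matching and turning conditions. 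Relative to $\Phi_N$ you have the roles reversed: you obtain Noether as stationarity and matching from geometry. As a result, your uniqueness route (slice concavity plus an unspecified monotonicity in $C$) concerns a functional whose concavity properties are unknown. Relatedly, you assert but do not derive that one center and one scalar $C$ satisfy $N$ relations with only three free parameters. ``Translation invariance lets one place the origin'' is not a derivation from the multipliers of the diameter constraints.

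\emph{The concavity.} Even for the correct action, no per-triangle $E$-block is semidefinite. From $E_{uu}=\frac{u+v-1}{4u\Delta}$, $E_{vv}=\frac{u+v-1}{4v\Delta}$, $E_{uv}=-\frac1{2\Delta}$ and $16\Delta^2=4uv-(u+v-1)^2$, one gets $\det D^2E(u,v)=-1/(uv)<0$ identically. All negativity must come from the vertex terms $(V_C)_{uu}=-4/\sin x$, together with the domain constraint $0<2(u_i+C)<1$. That constraint forces $|u_i-u_j|<\tfrac12$, hence $|\sqrt{u_i}-\sqrt{u_{i+1}}|<\tfrac12$. This keeps every triangle away from the degeneration $|\sqrt{u_i}-\sqrt{u_{i+1}}|\to1$, where $D^2E$ has a positive eigenvalue of order $1/\Delta$. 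Your plan supplies neither the allocation of vertex terms nor this mechanism. Interval certification on a compact core cannot cover every even $n\ge8$ until the problem is reduced to a uniform, bounded-dimensional local inequality, and that reduction is exactly the missing content.

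Two further inputs are asserted rather than proved. The cycle-with-pendant skeleton is the Foster--Szab\'o theorem \cite{FosterSzabo2007} (Graham's conjecture); it does not follow from the pendant-absorbing deformation you describe. And the claim that every maximizer lies in $\Dnat$, on the principal consistently oriented branch, is never argued.
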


For reference, the action can be written explicitly.  If
\begin{equation}
 \Delta(u,v)=\frac14\sqrt{2uv+2u+2v-u^2-v^2-1}                             \label{eq:triangle-area}
\end{equation}
is the area of the triangle with sides $\sqrt u,\sqrt v,1$, and its base
angles are $\alpha,\beta$, set
\begin{equation}
 E(u,v)=4\Delta(u,v)-2u\alpha-2v\beta.                                    \label{eq:E-def}
\end{equation}
For $x\in(0,\pi/2)$ determined by $\cos x=2(u+C)$, put
\begin{equation}
 V_C(u)=x\cos x-\sin x.                                                    \label{eq:V-def}
\end{equation}
Then
\begin{equation}
 \Phi_N(C,u)=\sum_{i=0}^{N-1}E(u_i,u_{i+1})
 +2V_C(u_0)+\sum_{i=1}^{N-1}V_C(u_i)-2\pi C.                              \label{eq:Phi-def}
\end{equation}
The natural principal domain imposes
\begin{equation}
 u_i>0,\qquad 0<2(u_i+C)<1,\qquad
 |\sqrt{u_i}-\sqrt{u_{i+1}}|<1<\sqrt{u_i}+\sqrt{u_{i+1}}.                 \label{eq:natural-domain}
\end{equation}
The derivative identities
\begin{equation}
 E_u=-2\alpha,\qquad E_v=-2\beta,\qquad
 (V_C)_u=(V_C)_C=2x                                                       \label{eq:action-derivatives}
\end{equation}
give
\begin{align}
 \partial_{u_i}\Phi_N&=2(\gamma_i-\beta_{i-1}-\alpha_i) &&(i\ne0),       \label{eq:Phi-ui}\\
 \partial_{u_0}\Phi_N&=2(2\theta-\beta_{N-1}-\alpha_0),                 \label{eq:Phi-u0}\\
 \partial_C\Phi_N&=2\left(2\theta+\sum_{i=1}^{N-1}\gamma_i-\pi\right).
                                                                            \label{eq:Phi-C}
\end{align}

Uniqueness and invariance under reversal force reflection symmetry.  We
label the marked vertex by $0$.  Reflection acts on vertices by $i\mapsto-i$
and on an edge $j=(j,j+1)$ by
\begin{equation}
 j\longmapsto-j-1.                                                         \label{eq:edge-reflection}
\end{equation}
In particular $\alpha_0=\beta_{N-1}=\theta$.  The fixed-$C$ consequence of
\eqref{eq:Phi-concave}, rather than merely uniqueness of the full
$(C,u)$-critical point, is the precise global bridge used in
Section~\ref{sec:shadowing-shape}.

\section{Exact finite equations and the scaled limit}
\label{sec:scaled-limit}

Throughout the rest of the paper,
\begin{equation}
 n=2m,\qquad N=n-1=2m-1.                                                   \label{eq:nNm}
\end{equation}

\subsection{The exact radial recurrence}

Fix $C$.  Given consecutive radii $r_{i-1},r_i$ at an ordinary vertex, the
principal branch in Theorem~\ref{thm:imported-even} gives
\begin{align}
 \gamma_i&=\arccos\bigl(2(r_i^2+C)\bigr),                                  \label{eq:exact-gamma}\\
 \beta_{i-1}&=\arccos\left(\frac{r_i^2+1-r_{i-1}^2}{2r_i}\right),          \label{eq:exact-beta}\\
 \alpha_i&=\gamma_i-\beta_{i-1}.                                          \label{eq:exact-alpha}
\end{align}
The cosine rule then determines the next radius:
\begin{equation}
 r_{i+1}=\mathcal R_C(r_{i-1},r_i)
 :=\sqrt{r_i^2+1-2r_i\cos\alpha_i}.                                       \label{eq:exact-radial-map}
\end{equation}
At the marked end, $\cos\theta=2(r_0^2+C)$ and
$\alpha_0=\theta$, so
\begin{equation}
 r_1^2=r_0^2+1-4r_0(r_0^2+C).                                              \label{eq:exact-pendant}
\end{equation}
These are exact equations; no small-angle approximation has been used.

\subsection{The homogeneous state}

For a homogeneous bulk deficit $g\in(0,\pi/2)$, direct substitution gives
\begin{equation}
 r_g=\frac1{2\cos(g/2)},\qquad
 u_g=\frac1{4\cos^2(g/2)},\qquad
 C_g=\frac12\cos g-\frac1{4\cos^2(g/2)}.                                 \label{eq:homogeneous}
\end{equation}
In particular,
\begin{equation}
 C_g=\frac14-\frac{5g^2}{16}+\frac{g^4}{96}
       -\frac{5g^6}{2304}+O(g^8).                                         \label{eq:Cg-series}
\end{equation}
Thus each $C<1/4$ sufficiently close to $1/4$ has a unique representation
$C=C_g$ with $g>0$.

Linearizing the ordinary $u$-critical equations at the homogeneous state
gives
\begin{equation}
 \xi_{i+1}+a(g)\xi_i+\xi_{i-1}=0,
 \qquad a(g)=2\cos g+\sec^2(g/2).                                         \label{eq:finite-Jacobi}
\end{equation}
For example, at $u=u_g$ and $\Delta=\tfrac12u_g\sin g$,
\[
 E_{uv}=-\frac1{2\Delta},\qquad
 E_{uu}=\frac{2u_g-1}{4u_g\Delta},\qquad
 (V_C)_{uu}=-\frac4{\sin g};
\]
the ratio of diagonal to off-diagonal Hessian coefficients is
$u_g^{-1}-2+4u_g=a(g)$.  The exact finite-$g$ multipliers therefore solve
\begin{equation}
 \lambda^2+a(g)\lambda+1=0.                                               \label{eq:finite-multipliers}
\end{equation}

\subsection{The limiting recurrence}

Set $C=C_g$ and scale the central deficits by
\begin{equation}
 w_i=\frac{\eta_i}{g}.                                                      \label{eq:def-w}
\end{equation}
In a bounded scaled regime write
\begin{equation}
 u_i=\frac14+g^2U_i+O(g^4).                                                \label{eq:u-scaled}
\end{equation}
Expansion of the Noether and matching equations gives the unique marked
coefficient
\begin{equation}
 U_0=\frac{5-w_0^2}{16},                                                    \label{eq:U0}
\end{equation}
and, at every ordinary radial index,
\begin{equation}
 U_i=\frac{5-(w_{i-1}+w_i)^2}{16}.                                         \label{eq:Ui}
\end{equation}
The exact chord identity
\begin{equation}
 1=u_i+u_{i+1}+2\sqrt{u_iu_{i+1}}\cos(gw_i)                               \label{eq:chord-u}
\end{equation}
then yields, at order $g^2$,
\begin{equation}
 (w_{i-1}+w_i)^2+(w_i+w_{i+1})^2+2w_i^2=10,\qquad i\ge1.                 \label{eq:limiting-recurrence}
\end{equation}
At the pendant boundary one obtains
\begin{equation}
 3w_0^2+(w_0+w_1)^2=10.                                                     \label{eq:limiting-boundary}
\end{equation}
Positivity selects the analytic local branch
\begin{equation}
 w_{i+1}=F(w_{i-1},w_i)
 :=\sqrt{10-2w_i^2-(w_{i-1}+w_i)^2}-w_i.                                  \label{eq:F-map}
\end{equation}
At the regular fixed point $(1,1)$,
\begin{equation}
 DF(1,1)=(-1,-3),\qquad
 \lambda_{s,u}=\frac{-3\pm\sqrt5}{2}.                                    \label{eq:golden-roots}
\end{equation}
Reflection at the opposite edge of a finite half-cycle becomes
\begin{equation}
 (w_{m-2}+w_{m-1})^2+w_{m-1}^2=5.                                         \label{eq:limiting-terminal}
\end{equation}

\section{The half-line variational problem}
\label{sec:boundary-layer}

Let
\begin{equation}
 \Hh=\{w=(w_j)_{j\ge0}: w_j\ge0,\ w-1\in\ell^2(\N_0)\}.                  \label{eq:H-space}
\end{equation}
For $w\in\Hh$ define $\J$ by \eqref{eq:intro-J}.  The series is absolutely
convergent: $\psi(1,1)=0$, $\nabla\psi(1,1)=0$, and an $\ell^2$ sequence is
bounded, so the cubic Taylor remainder is summable.

Direct differentiation gives
\begin{align}
 \frac{d}{dw_0}\{b(w_0)+\psi(w_0,w_1)\}
 &=3w_0^2+(w_0+w_1)^2-10,                                                  \label{eq:J-Euler-boundary}\\
 \partial_2\psi(w_{j-1},w_j)+\partial_1\psi(w_j,w_{j+1})
 &=(w_{j-1}+w_j)^2+(w_j+w_{j+1})^2+2w_j^2-10.                             \label{eq:J-Euler-bulk}
\end{align}
Thus the Euler equations of $\J$ are precisely
\eqref{eq:limiting-boundary} and \eqref{eq:limiting-recurrence}.

The local Hessian has the exact quadratic form
\begin{equation}
 D^2\psi(x,y)[p,q]^2=2(x+y)(p+q)^2+4xp^2.                                 \label{eq:psi-hessian}
\end{equation}
It is positive semidefinite on the nonnegative quadrant and positive
definite when $x,y>0$.  Moreover $b''(x)=2x\ge0$.  The infinite action is
strictly convex on $\Hh$: equality in the Hessian along a nonzero segment
would force $p_j+p_{j+1}=0$ on every sufficiently remote edge and
$p_j=0$ there; propagation back along the chain gives $p\equiv0$.

For the quantitative argument fix
\begin{equation}
 K=\left[\frac12,\sqrt{\frac52}\right].                                   \label{eq:K}
\end{equation}
On $K^2$,
\begin{equation}
 D^2\psi(x,y)\succeq(3-\sqrt5)I\succ\frac34I,
 \qquad b''(x)\ge1.                                                       \label{eq:strong-convexity}
\end{equation}
Indeed the least local Hessian occurs at $x=y=1/2$, where its eigenvalues
are $3\pm\sqrt5$.  Taylor expansion from $(1,1)$ also gives
\begin{equation}
 \psi(x,y)\ge\frac38\bigl((x-1)^2+(y-1)^2\bigr),
 \qquad (x,y)\in K^2.                                                     \label{eq:psi-coercive}
\end{equation}

\begin{lemma}[Uniform bounds for a stable positive orbit]
\label{lem:stable-bounds}
Every positive solution of \eqref{eq:limiting-boundary} and
\eqref{eq:limiting-recurrence} that tends to one satisfies
\begin{equation}
 \frac{\sqrt{30}-\sqrt{10}}4\le w_j\le\sqrt{\frac52}\qquad(j\ge0).      \label{eq:stable-bounds}
\end{equation}
In particular, it lies in the interior of $K^{\N_0}$.
\end{lemma}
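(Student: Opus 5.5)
The two bounds in \eqref{eq:stable-bounds} come straight from the algebraic identities \eqref{eq:limiting-recurrence} and \eqref{eq:limiting-boundary}, using only positivity of the orbit. The convergence $w_j\to1$ is not needed. I would prove the upper bound first and then feed it into the lower bound.

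\emph{Upper bound.} Fix $j\ge1$. Since all terms are positive, $(w_{j-1}+w_j)^2>w_j^2$ and $(w_j+w_{j+1})^2>w_j^2$. Inserting these into \eqref{eq:limiting-recurrence} gives $4w_j^2<10$. At $j=0$, \eqref{eq:limiting-boundary} together with $(w_0+w_1)^2>w_0^2$ gives $4w_0^2<10$ in the same way. Hence $w_j<\sqrt{5/2}$ strictly for every $j\ge0$.

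\emph{Lower bound.} Put $s=\sqrt{5/2}$ and fix $j\ge1$. Both neighbours satisfy $w_{j\pm1}<s$ by the previous step, so \eqref{eq:limiting-recurrence} yields
\[
 10<2(s+w_j)^2+2w_j^2 .
\]
With $s^2=5/2$ this is equivalent to $w_j^2+s\,w_j-\tfrac54>0$. The positive root of this quadratic is
\[
 \frac{-s+\sqrt{s^2+5}}2=\frac{\sqrt{15/2}-\sqrt{5/2}}2=\frac{\sqrt{30}-\sqrt{10}}4,
\]
so $w_j$ exceeds this value strictly. At $j=0$, \eqref{eq:limiting-boundary} with $w_1<s$ gives $4w_0^2+2sw_0+s^2>10$. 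The positive root of the corresponding equation is $(-2s+\sqrt{4s^2+120})/8\approx1.03$, which is larger than $(\sqrt{30}-\sqrt{10})/4$, so the claimed bound also holds at $j=0$.

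\emph{Interior of $K$.} Numerically $(\sqrt{30}-\sqrt{10})/4\approx0.579>1/2$. Combined with the strict upper bound $w_j<\sqrt{5/2}$, every $w_j$ lies in the open interval $(1/2,\sqrt{5/2})$, i.e.\ in the interior of $K$. There is no real obstacle in this argument. The only care needed is to keep every inequality strict, so that the orbit lands in the open interior rather than merely in $K^{\N_0}$.
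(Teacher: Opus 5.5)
Your argument is correct and rests on the same two inequalities as the paper's proof: $10\ge 4w_j^2$ from positivity of the neighbours, and $10\le 2(w_j+\sqrt{5/2})^2+2w_j^2$ from the upper bound applied at the neighbours. The paper applies these only at a finite index where the maximum (respectively minimum) is attained, which is its only use of $w_j\to1$; your pointwise version shows that this hypothesis is unnecessary, and it also supplies the strict inequalities that the ``interior'' conclusion actually requires.
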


\begin{proof}
Any value above one has a finite index at which the maximum $M$ is attained.
At an interior maximum,
\[
 10=(w_{j-1}+M)^2+(M+w_{j+1})^2+2M^2\ge4M^2;
\]
the marked equation gives the same conclusion at $j=0$.  Hence
$M\le\sqrt{5/2}$.  If the minimum $a$ is below one, it is likewise attained
at a finite index.  Using the upper bound for both neighbors gives
\[
 10\le2\left(a+\sqrt{\frac52}\right)^2+2a^2.
\]
Solving the quadratic inequality yields the lower bound in
\eqref{eq:stable-bounds}; the marked endpoint only improves it.
\end{proof}

\begin{lemma}[Existence by finite variational sections]
\label{lem:halfline-existence}
There exists $w^*\in\Hh\cap K^{\N_0}$ satisfying the limiting Euler
equations and $w_j^*\to1$.
\end{lemma}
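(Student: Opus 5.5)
The plan is to obtain $w^*$ as a limit of minimizers of finite truncations of $\J$, following the title of the lemma. For $R\ge1$ put
\[
 \J_R(w)=b(w_0)+\sum_{j=0}^{R-1}\psi(w_j,w_{j+1}),
\]
and minimize over $w=(w_0,\dots,w_R)\in K^{R+1}$ with the tail clamp $w_R=1$, so that the competitor extended by $w_j=1$ for $j>R$ lies in $\Hh$ and satisfies $\J(w)=\J_R(w)$. Since $K^{R+1}$ is compact and convex, and $\J_R$ is strictly convex there by \eqref{eq:strong-convexity}, there is a unique minimizer $w^{(R)}$. The first task is to show that this minimizer satisfies the Euler equations \eqref{eq:limiting-boundary} and \eqref{eq:limiting-recurrence} at every index $j<R$, i.e.\ that no box constraint is active. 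For this I would use a maximum-principle argument like the one in Lemma~\ref{lem:stable-bounds}. Suppose $w_j=\sqrt{5/2}$ is the largest entry. The one-sided derivative condition gives $(w_{j-1}+w_j)^2+(w_j+w_{j+1})^2+2w_j^2-10\le0$, yet the left side is at least $4\cdot\frac52-10\ge0$, with strict inequality unless both neighbours vanish, which is impossible in $K$. Then strict convexity in the $w_j$ direction lets one push $w_j$ inward and decrease the action. A symmetric argument excludes $w_j=\tfrac12$, using the upper bound on the neighbours exactly as in the proof of Lemma~\ref{lem:stable-bounds}, since $\tfrac12<(\sqrt{30}-\sqrt{10})/4\approx0.579$. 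The marked index $j=0$ is handled by \eqref{eq:J-Euler-boundary} in the same way.

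Second, I need $R$-uniform control. The minimizer satisfies $\J_R(w^{(R)})\le\J_R(1,1,\dots,1)=b(1)=1/3-4+11/3=0$. Also $b(w_0)\ge b(2)=-1$ on $K$, because $b'(x)=x^2-4<0$ there. Combined with \eqref{eq:psi-coercive}, this gives
\[
 \tfrac38\sum_{j\le R}(w^{(R)}_j-1)^2\le \J_R(w^{(R)})-b(w^{(R)}_0)\le 1,
\]
so the extended sequences $w^{(R)}-1$ form a bounded family in $\ell^2$. By a diagonal argument (or weak compactness in $\ell^2$) I pass to a subsequence converging coordinatewise to some $w^*$ with $w^*-1\in\ell^2$ and $w^*\in K^{\N_0}$. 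Each Euler equation involves only three consecutive coordinates and holds for all large $R$, so it passes to the limit. Therefore $w^*$ satisfies \eqref{eq:limiting-boundary} and \eqref{eq:limiting-recurrence}, and $w^*_j\to1$ because $w^*-1\in\ell^2$.

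The main obstacle is the first step, namely interiority of the constrained finite minimizer near the clamped end $j=R-1$ and at the lower boundary $\tfrac12$. There the neighbour bound used in Lemma~\ref{lem:stable-bounds} must come from the box itself rather than from the equations. If this maximum-principle argument turns out to be awkward, I would instead accept possibly active constraints, take the limit first, and note that the limit is a minimizer of $\J$ on $\Hh\cap K^{\N_0}$ by lower semicontinuity, since the cubic tail terms are controlled by the uniform $\ell^2$ bound and $\ell^\infty$ bound. I would then check interiority for this infinite minimizer directly, because it is $\ell^2$-close to $1$ at infinity and the extremal-index argument applies at finite indices.
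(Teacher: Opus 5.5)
Your proposal is correct and follows the paper's proof essentially step for step. The steps are the Dirichlet-clamped minimization on $K^{R+1}$ and the coordinatewise inward-pointing derivative test on the faces of $K$. Because that test uses only the box bounds on the neighbours, it also settles the interiority concern you raise at the clamped end and the lower face, so your fallback is unnecessary. After that come the bound $\J_R(w^{(R)})\le 0$ combined with \eqref{eq:psi-coercive} for a uniform $\ell^2$ estimate, and diagonal extraction with local passage to the limit in the Euler equations. One small arithmetic slip: $b(2)=-5/3$, not $-1$, and $\min_K b=b(\sqrt{5/2})\approx-1.34$; this changes only the constant in your uniform $\ell^2$ bound.
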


\begin{proof}
For $L\ge0$, impose $w_{L+1}=1$ and minimize
\begin{equation}
 \J_L(w_0,\ldots,w_L)=b(w_0)+\sum_{j=0}^{L}\psi(w_j,w_{j+1})              \label{eq:J-L}
\end{equation}
on $K^{L+1}$.  Strong convexity gives a unique minimizer.  It is interior.
For an ordinary coordinate the worst derivative on the lower face is
\[
 2\left(\frac12+\sqrt{\frac52}\right)^2+\frac12-10<0,
\]
and the worst derivative on the upper face is
\[
 2\left(\sqrt{\frac52}+\frac12\right)^2+5-10>0.
\]
The marked and Dirichlet endpoint derivatives satisfy the same strict
inward-pointing test.

Let $w^{(L)}$ be the minimizer and extend it by ones.  Since the regular
vector is admissible, $\J_L(w^{(L)})\le0$.  With
$B_K=-\min_{x\in K}b(x)$, \eqref{eq:psi-coercive} gives
\begin{equation}
 \sum_{j\ge0}(w_j^{(L)}-1)^2\le\frac{8B_K}{3},                             \label{eq:L2-uniform}
\end{equation}
uniformly in $L$.  Coordinatewise compactness and a diagonal extraction
give a limit $w^*\in K^{\N_0}$.  Fatou's lemma gives $w^*-1\in\ell^2$, hence
$w_j^*\to1$.  Every fixed Euler equation is eventually unaffected by the
artificial terminal value, and so the limit satisfies
\eqref{eq:limiting-boundary}--\eqref{eq:limiting-recurrence}.
\end{proof}

\section{Hyperbolicity, uniqueness, and the golden tail}
\label{sec:hyperbolicity}

Near $(1,1)$ the recurrence is generated by the analytic map
\begin{equation}
 T(x,y)=(y,F(x,y)),\qquad
 DT(1,1)=\begin{pmatrix}0&1\\-1&-3\end{pmatrix}.                          \label{eq:T-linearization}
\end{equation}
Its eigenvalues are $\lambda_s=(-3+\sqrt5)/2$ and
$\lambda_u=(-3-\sqrt5)/2=\lambda_s^{-1}$.

\begin{lemma}[A stable tail has finite energy]
\label{lem:stable-l2}
If a positive solution of the limiting equations satisfies $w_j\to1$, then
there are $C<\infty$ and $\rho<1$ such that
\begin{equation}
 |w_j-1|\le C\rho^j                                                     \label{eq:stable-geometric}
\end{equation}
for all sufficiently large $j$.  In particular, $w-1\in\ell^2(\N_0)$.
\end{lemma}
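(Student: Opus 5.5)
The plan is to use the hyperbolicity of the fixed point $(1,1)$ of the analytic map $T$ in \eqref{eq:T-linearization}, combined with the stable manifold theorem. Since $w_j\to1$, the pairs $(w_j,w_{j+1})$ converge to $(1,1)$. For $j$ large they lie in any prescribed neighbourhood $U$ of $(1,1)$. In that neighbourhood the positive branch $F$ from \eqref{eq:F-map} is analytic, and the recurrence \eqref{eq:limiting-recurrence} is exactly $(w_j,w_{j+1})\mapsto T(w_j,w_{j+1})$. Positivity of the solution forces this branch: $w_{j+1}+w_j>0$ selects the positive square root.

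The eigenvalues $\lambda_s,\lambda_u$ satisfy $|\lambda_s|<1<|\lambda_u|$, so $(1,1)$ is a hyperbolic saddle of $T$. The local stable manifold theorem (Hadamard--Perron) then says the following. There are a neighbourhood $U$ and a constant $\rho\in(|\lambda_s|,1)$ such that any orbit remaining in $U$ for all forward times lies on the local stable manifold $W^s_{\rm loc}$. Along $W^s_{\rm loc}$ the distance to $(1,1)$ contracts at least by $\rho$ per step, up to a constant.

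Our orbit tail stays in $U$ from some index $j_0$ on, since it converges to $(1,1)$. Hence $|(w_j,w_{j+1})-(1,1)|\le C\rho^{j-j_0}$ for $j\ge j_0$, and absorbing $\rho^{-j_0}$ into $C$ gives \eqref{eq:stable-geometric}.

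If one prefers to avoid quoting the manifold theorem, an elementary cone argument works. Write $\xi_j=(w_j-1,w_{j+1}-1)$ in the eigenbasis of $DT(1,1)$ as $(s_j,t_j)$. Near the fixed point the Taylor remainder is $O(|\xi|^2)$. Choose $\epsilon$ small so that the unstable cone $\{|t|\ge|s|\}$ is forward invariant and expanding: $|t_{j+1}|\ge(|\lambda_u|-\kappa)|t_j|$ with $|\lambda_u|-\kappa>1$. An orbit entering this cone would then leave the $\epsilon$-ball, which contradicts convergence. So for large $j$ the orbit lies in the stable cone $\{|t|<|s|\}$. There $|s_{j+1}|\le(|\lambda_s|+\kappa)|s_j|$, and $|\xi_j|\le2|s_j|$ up to basis constants. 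This yields the geometric decay with $\rho=|\lambda_s|+\kappa<1$.

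The $\ell^2$ conclusion is immediate. The finitely many initial terms are finite, and the tail is dominated by a geometric series.

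The main point requiring care is the cone-invariance step, namely the uniform control of the nonlinear remainder of $F$ near $(1,1)$. Because $F$ is analytic there, with a nonvanishing square-root argument ($10-2-4=4>0$), the remainder is $O(|\xi|^2)$ with explicit constants. Taking $\epsilon$ small then makes $\kappa$ as small as needed.
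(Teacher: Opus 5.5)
Your proof is correct and takes the paper's approach: positivity selects the analytic branch $F$, the tail of the orbit stays near the hyperbolic saddle $(1,1)$ of $T$, so it lies on the local stable manifold and converges geometrically, exactly as the paper argues by citing the stable-manifold theorem. Your optional cone argument is a sound, self-contained substitute for that citation.
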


\begin{proof}
An orbit converging to the hyperbolic fixed point eventually belongs to its
one-dimensional local stable manifold.  The stable-manifold theorem applied
to \eqref{eq:T-linearization} gives geometric convergence
\cite[Chapter~6]{KatokHasselblatt1995}.
\end{proof}

\begin{theorem}[Unique stable pendant orbit]
\label{thm:stable-orbit}
The sequence $w^*$ of Lemma~\ref{lem:halfline-existence} is the unique
positive sequence satisfying \eqref{eq:limiting-boundary},
\eqref{eq:limiting-recurrence}, and $w_j\to1$.  It is the unique minimizer of
$\J$ on $\Hh$.
\end{theorem}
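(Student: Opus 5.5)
The plan is to pass through the variational characterization and use strict convexity of $\J$ on $\Hh$ to obtain both uniqueness statements together.

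\emph{Step 1: every positive solution is a critical point of $\J$ in $\Hh$.} Let $w$ be a positive solution of \eqref{eq:limiting-boundary}--\eqref{eq:limiting-recurrence} with $w_j\to1$. Lemma~\ref{lem:stable-l2} gives geometric convergence, so $w-1\in\ell^2(\N_0)$ and $w\in\Hh$. Lemma~\ref{lem:stable-bounds} places $w$ in the interior of $K^{\N_0}$. The Euler equations \eqref{eq:J-Euler-boundary}--\eqref{eq:J-Euler-bulk} are exactly the limiting equations. Hence the first variation of $\J$ at $w$ vanishes along every finitely supported direction. It also vanishes along every $\ell^2$ direction $h$ with $w+h\in\Hh$, by the following argument. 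Truncate $h$ to its first $L$ coordinates. The remainder term is controlled because the gradient coefficients of $\psi$ are bounded on $K^2$ and vanish at $(1,1)$ to first order. The boundary term at the truncation index is therefore $O(|w_L-1|\,|h_{L+1}|)$, which tends to zero.

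\emph{Step 2: a convex critical point is the unique global minimizer.} Take any $v\in\Hh$. The function $t\mapsto\J(w+t(v-w))$ on $[0,1]$ is convex, because the Hessian quadratic form \eqref{eq:psi-hessian} and $b''\ge0$ are nonnegative on the nonnegative quadrant and $\Hh$ is convex. Its right derivative at $t=0$ is zero by Step 1. Therefore $\J(v)\ge\J(w)$. Strict convexity, whose propagation argument was given after \eqref{eq:psi-hessian}, makes the inequality strict when $v\ne w$. To be careful here, I will write $\J(v)-\J(w)$ as an integral of the second variation along the segment. The integrand is termwise nonnegative, and it is positive on some edge unless $v=w$; this is exactly the propagation argument.

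\emph{Step 3: conclusion.} Every positive solution tending to one is the unique minimizer of $\J$ on $\Hh$. Any two such solutions therefore coincide. Lemma~\ref{lem:halfline-existence} supplies the solution $w^*$, which settles both existence and the identification with the minimizer.

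\emph{Main obstacle.} The difficult point is justifying the term-by-term differentiation of the infinite series $\J$ in Step 1. The fix is the $\ell^2$ bookkeeping and truncation described there: the tail terms are Taylor expanded around $(1,1)$ and controlled by Cauchy--Schwarz, using boundedness of $w$ and of $v$. Note that $v\in\Hh$ is bounded because it is an $\ell^2$ perturbation of the constant sequence $1$.
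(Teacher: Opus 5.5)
Your proposal is correct and takes essentially the same approach as the paper. You use Lemmas~\ref{lem:stable-bounds} and~\ref{lem:stable-l2} to place any positive stable orbit in $K^{\N_0}\cap\Hh$, extend the vanishing first variation from finitely supported to all $\ell^2$ directions (because $\nabla\psi$ vanishes at $(1,1)$ and is Lipschitz on bounded sets), and then use convexity and strict convexity of $\J$ on $\Hh$ to show that every such orbit, in particular $w^*$, is the unique minimizer. Your truncation bookkeeping and integral form of the second variation simply spell out the paper's ``continuity plus density'' and strict-convexity steps in more detail.
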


\begin{proof}
The first variation at $w^*$ vanishes on finitely supported directions.
The local Hessians are uniformly bounded on $K^2$, so the first-variation
operator is continuous on $\ell^2$; density extends the vanishing to every
$\ell^2$ direction.  Convexity therefore makes $w^*$ a global minimizer, and
strict convexity makes it unique.

Conversely, let $w$ be any positive stable orbit.  By
Lemma~\ref{lem:stable-bounds} it lies in $K$, and by
Lemma~\ref{lem:stable-l2} it belongs to $\Hh$.  Its Euler equations annihilate
the first variation first on finitely supported directions and then on all
of $\ell^2$.  It is consequently a critical point of the strictly convex
functional and must equal $w^*$.
\end{proof}

\begin{proposition}[Transversality and sharp asymptotic tail]
\label{prop:transversality}
The pendant curve \eqref{eq:limiting-boundary} intersects the stable manifold
of $(1,1)$ transversely at $w^*$.  There is a nonzero constant $c_*$ such
that
\begin{equation}
 w_j^*-1=c_*\lambda_s^j+O(|\lambda_s|^{2j}).                               \label{eq:sharp-tail}
\end{equation}
In particular, for every $\rho\in(|\lambda_s|,1)$,
$|w_j^*-1|\le C_\rho\rho^j$.
\end{proposition}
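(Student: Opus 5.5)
The plan has three parts: show that the pendant curve meets the stable manifold transversely, derive the sharp expansion \eqref{eq:sharp-tail} from the analytic linearization of $T$ on that manifold, and read off $c_*\ne0$ from transversality. The natural tool throughout is the strict convexity of $\J$ and of its finite sections, which by Theorem~\ref{thm:stable-orbit} already singles out $w^*$.

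\emph{Transversality.} Parametrize the pendant curve \eqref{eq:limiting-boundary} near $(w_0^*,w_1^*)$ by $w_0$, with $w_1=\sqrt{10-3w_0^2}-w_0$. The local stable manifold $W^s$ of $(1,1)$ is an analytic curve. Its forward iterates under $T$ stay near $(1,1)$, so its preimage under $T^{j}$ through the point $(w_j^*,w_{j+1}^*)$ for large $j$ is again an analytic curve through $(w_0^*,w_1^*)$.

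Suppose the intersection were tangential. Then there is a nonzero tangent vector $p_0,p_1$ of the pendant curve whose propagation $p_j$ under the linearized recurrence decays geometrically. This $p$ is then a nonzero $\ell^2$ solution of the linearized Euler equations at $w^*$, boundary row included, i.e.\ $D^2\J(w^*)p=0$.

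Pairing with $p$ (legitimate since $p\in\ell^2$ and the local Hessians are bounded on $K^2$) gives
\[
\sum_j D^2\psi(w_j^*,w_{j+1}^*)[p_j,p_{j+1}]^2+b''(w_0^*)p_0^2=0 .
\]
By \eqref{eq:strong-convexity}, with $w^*\in K^{\N_0}$ from Lemma~\ref{lem:stable-bounds}, the left side is at least $\tfrac34\|p\|^2$, forcing $p=0$, a contradiction.

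The one point needing care is identifying ``tangent to $W^s$'' with ``linearized orbit in $\ell^2$''. For this I use that the linearized cocycle along $w^*$ is asymptotically the hyperbolic matrix $DT(1,1)$, and that $w^*$ converges geometrically by Lemma~\ref{lem:stable-l2}. The cocycle therefore has an exponential dichotomy: vectors tangent to $W^s$ decay like $|\lambda_s|^j$, and all others grow like $|\lambda_u|^j$.

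\emph{Sharp tail.} Since $T$ is analytic and $\lambda_s\lambda_u=1$ with $|\lambda_s|<1$, there is no resonance $\lambda_s^k=\lambda_s$ for $k\ge2$. Poincaré's theorem therefore gives an analytic conjugacy $h$ on $W^s$ with $h(0)=0$, $h'(0)\neq0$, such that $T|_{W^s}$ is conjugate to $s\mapsto\lambda_s s$. Writing $(w_j^*,w_{j+1}^*)=\phi(\lambda_s^j s_0)$ with $\phi$ analytic and $\phi(0)=(1,1)$ yields
\[
w_j^*-1=c_*\lambda_s^j+O(|\lambda_s|^{2j}),
\]
where $c_*$ is $s_0$ times the first component of $\phi'(0)$, the latter being the eigenvector $(1,\lambda_s)$ up to scaling. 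The geometric bound $|w_j^*-1|\le C_\rho\rho^j$ for $\rho\in(|\lambda_s|,1)$ follows immediately.

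\emph{Nonvanishing of $c_*$.} $c_*=0$ means $s_0=0$, i.e.\ $w^*\equiv1$. But $(1,1)$ violates \eqref{eq:limiting-boundary}, since $3+4=7\ne10$. Hence $s_0\ne0$, and since $\phi'(0)\ne0$, $c_*\ne0$.

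The main obstacle is making the transversality step rigorous: showing that a tangential intersection produces a genuine $\ell^2$ kernel vector of $D^2\J(w^*)$. This requires the exponential dichotomy of the linearized nonautonomous recurrence. I would first try roughness of dichotomies under perturbations of the constant hyperbolic matrix that are summable (here even geometrically small).
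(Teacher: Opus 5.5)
Your proposal is correct and follows the paper's proof essentially step for step. The paper uses the same three steps. First, a tangency would give a nonzero decaying Jacobi field, i.e.\ an $\ell^2$ kernel vector of $D^2\J(w^*)$, which the coercivity bound $D^2\J(w^*)[h,h]\ge\frac34\|h\|_2^2$ excludes. Second, the tail comes from the analytic linearizing coordinate of $T$ on its one-dimensional stable manifold. Third, $c_*=0$ would make the orbit identically one, contradicting $7\ne10$ in the pendant row.

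The step you flag as the main obstacle is lighter than you suggest, because only the implication ``tangent to $W^s$ $\Rightarrow$ the linearized orbit decays'' is used. That implication follows by differentiating $T^k\circ\phi=\phi(\lambda_s^k\,\cdot)$ once the orbit has entered $W^s_{\mathrm{loc}}$. Pulling back over the finitely many earlier steps is harmless, since $\partial_xF=-(x+y)/(F+y)\ne0$ makes $T$ a local diffeomorphism along the orbit. So no exponential dichotomy or roughness theorem is needed.
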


\begin{proof}
Tangency would produce, by differentiation, a nonzero decaying Jacobi field
satisfying the linearized pendant condition.  Such a field would be an
$\ell^2$ kernel vector of $D^2\J(w^*)$.  The coercivity inherited from
\eqref{eq:strong-convexity} gives
\begin{equation}
 D^2\J(w^*)[h,h]\ge\frac34\|h\|_2^2,                                     \label{eq:J-coercivity}
\end{equation}
so this is impossible.

The restriction of $T$ to its one-dimensional analytic stable manifold has
an analytic linearizing coordinate.  Expanding the inverse coordinate at
zero gives \eqref{eq:sharp-tail}.  If $c_*=0$, the stable coordinate of the
orbit would be zero and the orbit would be identically one.  This contradicts
$3\cdot1^2+(1+1)^2=7\ne10$ at the pendant boundary.
\end{proof}

\section{Uniform finite-cycle shadowing and the shape theorem}
\label{sec:shadowing-shape}

The limiting orbit describes the true maximizer only after two uniform
statements: the maximizing Noether data enter the common small-angle regime,
and the reflected finite boundary-value problem shadows $w^*$ without a
constant growing with $m$.

\subsection{Localization}

Let $P_n$ be the maximizer from Theorem~\ref{thm:imported-even}, and let
$R_n$ be its marked Reuleaux body.  The exact polygonization identity in the
companion construction is
\begin{equation}
 A(P_n)=A(R_n)-\left(2s(\theta)+\sum_{i\ne0}s(\gamma_i)\right),
 \qquad s(z)=\frac{z-\sin z}{2}.                                           \label{eq:polygonization}
\end{equation}
The isodiametric inequality gives $A(R_n)\le\pi/4$, whereas feasible regular
even polygons have areas tending to $\pi/4$.  Hence $A_n\to\pi/4$, the
nonnegative loss in \eqref{eq:polygonization} tends to zero, and
\begin{equation}
 \max(\theta,\gamma_1,\ldots,\gamma_{N-1})\longrightarrow0.                \label{eq:angles-small}
\end{equation}
The Noether relations and the strict radial triangle inequality give
$C<1/4$.  If $C\le1/4-\eps$ on a subsequence, then
\eqref{eq:angles-small} keeps all radii uniformly above $1/2$; every unit
chord then has a central deficit bounded below, contradicting
\eqref{eq:turning}.  Thus $C\uparrow1/4$.

Put
\begin{align}
 s_N&=\sqrt{1-4C},&
 x_i&=\frac{2\sin(\gamma_i/2)}{s_N}\quad(i\ne0),                           \notag\\
 x_0&=\frac{2\sin(\theta/2)}{s_N},&
 y_i&=\frac{2\sin(\eta_i/2)}{s_N}.                                        \label{eq:local-vars}
\end{align}
Then exactly
\begin{equation}
 u_i=\frac14\{1+s_N^2(1-x_i^2)\}.                                        \label{eq:local-u}
\end{equation}
Concavity of the square root and
$\sqrt{u_i}+\sqrt{u_{i+1}}>1$ imply $x_i^2+x_{i+1}^2<2$.  The chord equation
and triangle sine law now give, uniformly in $i$,
\begin{align}
 y_i^2&=4-2x_i^2-2x_{i+1}^2+O(s_N^2),                                    \label{eq:loc-chord}\\
 x_i&=\frac{y_{i-1}+y_i}{2}+O(s_N^2) &&(i\ne0),                            \label{eq:loc-split}\\
 x_0&=\frac{y_0}{2}+O(s_N^2).                                              \label{eq:loc-marked}
\end{align}

\begin{lemma}[Uniform nonvanishing]
\label{lem:y-nonzero}
There are $c>0$ and $n_0$ such that $y_i\ge c$ on every edge of every
maximizer with even $n\ge n_0$.
\end{lemma}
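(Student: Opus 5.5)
The plan is to argue by contradiction along a subsequence, using the localized equations \eqref{eq:loc-chord}--\eqref{eq:loc-marked} together with the turning condition \eqref{eq:turning}. Suppose there are even $n_k\to\infty$ and edges $i_k$ with $y_{i_k}\to0$. Since $y_i=2\sin(\eta_i/2)/s_N$ and all deficits are positive, the $y_i$ are nonnegative. By the constraint $x_i^2+x_{i+1}^2<2$, \eqref{eq:loc-chord} gives the uniform bound $y_i^2\le4+O(s_N^2)$, and \eqref{eq:loc-split}--\eqref{eq:loc-marked} then bound all the $x_i$. So every local variable lives in a fixed compact box, uniformly in $i$ and $n$.

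The key step is a local propagation argument. If $y_{i}$ is small, then \eqref{eq:loc-chord} forces $x_i^2+x_{i+1}^2=2-O(y_i^2)-O(s_N^2)$. Since $x_i^2+x_{i+1}^2<2$ strictly, both $x_i$ and $x_{i+1}$ are bounded above by a quantity close to $\sqrt2$. Feeding this into \eqref{eq:loc-split}, written as $y_{i-1}=2x_i-y_i+O(s_N^2)$ and $y_{i+1}=2x_{i+1}-y_i+O(s_N^2)$, determines the neighbouring deficits. Applying \eqref{eq:loc-chord} on the neighbouring edges then gives a contradiction or a forced pattern. Concretely, I will pass to a limit: recentre at $i_k$, extract coordinatewise limits $(\bar x_j,\bar y_j)_{j\in\mathbb Z}$ (or a half-line if $i_k$ stays near the marked vertex), and obtain exact limiting equations
\[
\bar y_j^2=4-2\bar x_j^2-2\bar x_{j+1}^2,\qquad \bar x_j=\tfrac12(\bar y_{j-1}+\bar y_j),
\]
with $\bar y_0=0$ and $\bar x_j^2+\bar x_{j+1}^2\le2$. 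These equations are exactly the scaled recurrence \eqref{eq:limiting-recurrence} under $w=\bar y/\sqrt2$, up to normalisation. With $\bar y_0=0$ we get $\bar x_0^2+\bar x_1^2=2$ and $\bar x_0=\bar y_{-1}/2$, $\bar x_1=\bar y_1/2$. I then want to show that the limiting orbit cannot reach zero while staying nonnegative. The approach is to use the maximum/minimum argument of Lemma~\ref{lem:stable-bounds}: at a vanishing index the neighbours satisfy $\bar y_{-1}^2+\bar y_1^2=8$, and I will propagate this forward to show that some later $\bar y$ must become negative, or that the quantity $4-2\bar x^2-2\bar x^2$ goes negative. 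That would contradict nonnegativity.

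The main obstacle is that a single vanishing deficit may be compatible with the local limiting recurrence, so local analysis alone might not exclude it. In that case I would fall back on a global argument. The first route uses strict concavity \eqref{eq:Phi-concave}: a near-degenerate Noether triangle ($\eta_i$ tiny means $\delta_i\approx\pi$, so the triangle is nearly flat with $\sqrt{u_i}+\sqrt{u_{i+1}}\approx1$) sits near the boundary of $\Dnat$. I would exhibit a competitor, obtained by redistributing deficit to that edge while preserving \eqref{eq:turning}, that strictly increases area through \eqref{eq:polygonization}, contradicting maximality. The second route uses the turning sum: nonnegative bounded $y_i$ with $\sum\eta_i=\pi$ and $\eta_i\approx s_N y_i$. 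Combined with the regularity of the bulk, this should show that a zero forces a macroscopic excursion whose polygonization loss $\sum s(\gamma_i)\sim\sum x_i^3 s_N^3$ exceeds that of the regular competitor. I would try the local limit contradiction first, checking numerically whether $\bar y_{-1}^2+\bar y_1^2=8$ together with the recurrence forces a sign change within a few steps. I would reserve the competitor argument for the case where it does not.
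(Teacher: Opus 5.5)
Your setup is the paper's: argue by contradiction along a subsequence, pass to limits in \eqref{eq:loc-chord}--\eqref{eq:loc-marked}, and look for an inconsistency near the vanishing edge. But the proposal stops exactly where the proof happens, so there is a genuine gap.

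You say you ``will propagate this forward to show that some later $\bar y$ must become negative.'' You also worry that ``a single vanishing deficit may be compatible with the local limiting recurrence.'' Neither is right. The fallback global arguments (a deficit-redistributing competitor, or a polygonization-loss comparison) are not needed, and as written they are not specified well enough to be checked.

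The missing step is a saturation argument that uses a bound you already wrote down:
\begin{itemize}
\item Since $\bar x_j^2\ge0$, the limiting chord equation gives $\bar y_j\le2$ for every $j$.
\item At the vanishing edge, $\bar y_0=0$ gives $\bar x_0^2+\bar x_1^2=2$. With $\bar x_0=\bar y_{-1}/2$ and $\bar x_1=\bar y_1/2$, this reads $\bar y_{-1}^2+\bar y_1^2=8$.
\item So both neighbours saturate: $\bar y_{\pm1}=2$ and $\bar x_0=\bar x_1=1$.
\item Saturation on edge $1$ means $4=4-2\bar x_1^2-2\bar x_2^2$, which forces $\bar x_1=0$. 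This contradicts $\bar x_1=1$.
\end{itemize}
Alternatively, in the $w$-normalization the maximum argument of Lemma~\ref{lem:stable-bounds} gives $w_{j\pm1}^2\le5/2$ at every index. That is incompatible with $w_{j-1}^2+w_{j+1}^2=10$. Either way, the contradiction appears at the adjacent edge, not through a sign change further along.

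You also need the boundary cases explicitly; you mention the half-line only in passing.
\begin{itemize}
\item If the vanishing edge is the marked edge, \eqref{eq:loc-marked} gives $\bar x_0=\bar y_0/2=0$, while $\bar x_1=\bar y_1/2\le1$. Hence $\bar x_0^2+\bar x_1^2\le1<2$, a contradiction.
\item Edge $N-1$ is the mirror image of the marked edge, since $y_{N-1}=y_0$ by reflection.
\item If, like the paper, you first reflect so that $0\le i_k\le m_k-1$, the fixed opposite edge needs $X_m=X_{m-1}$. This gives $X_{m-1}=1$, then $Y_{m-2}=2$, and the preceding chord equation gives $X_{m-1}=0$. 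Your full-cycle recentring makes this edge an ordinary interior one.
\end{itemize}

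Two minor slips. The bound $y_i^2\le4+O(s_N^2)$ comes from $x_i^2,x_{i+1}^2\ge0$, not from $x_i^2+x_{i+1}^2<2$, which only gives a trivial lower bound. The rescaling to \eqref{eq:limiting-recurrence} is $w=\tfrac{\sqrt5}{2}\bar y$, not $\bar y/\sqrt2$.
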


\begin{proof}
Suppose $y_{i_k}\to0$ and reflect so that $0\le i_k\le m_k-1$.  Take limits
in \eqref{eq:loc-chord}--\eqref{eq:loc-marked}.  If $i_k=0$, then $X_0=0$
and $X_1\le1$, while the edge-zero chord equation requires
$X_0^2+X_1^2=2$.  At an interior edge write the adjacent limiting deficits
as $Y_-,0,Y_+$.  Then $X_i=Y_-/2$, $X_{i+1}=Y_+/2$, and the vanishing-edge
equation forces $Y_-=Y_+=2$ and $X_i=X_{i+1}=1$; the preceding edge equation
with deficit two instead forces $X_i=0$.  At the fixed opposite edge,
reflection gives $X_m=X_{m-1}$; the vanishing-edge equation gives
$X_{m-1}=1$, then splitting gives $Y_{m-2}=2$, and the preceding chord
equation gives $X_{m-1}=0$.  All three cases are contradictory.
\end{proof}

The chord equation also gives a uniform upper bound for $y_i$.  Consequently
$c_1s_N\le\eta_i\le C_1s_N$.  Summing and using \eqref{eq:turning} yields
\begin{equation}
 s_N=\Theta(N^{-1}).                                                        \label{eq:sN-scale}
\end{equation}
By \eqref{eq:Cg-series}, the parameter $g$ determined by $C=C_g$ satisfies
\begin{equation}
 g=\Theta(N^{-1}).                                                          \label{eq:g-scale}
\end{equation}

\subsection{The reflected analytic system}

Set
\begin{equation}
 a_i=\frac{u_i-u_g}{g^2}.                                                   \label{eq:def-a}
\end{equation}
For ordinary rows define, with $u(a)=u_g+g^2a$,
\begin{align}
 P_g(a)&=\frac1g\arccos(\cos g+2g^2a),                                    \label{eq:Pg}\\
 A_g(a,b)&=\frac1g\arccos\left(\frac{u(a)+1-u(b)}{2\sqrt{u(a)}}\right), \label{eq:Ag}\\
 B_g(a,b)&=\frac1g\arccos\left(\frac{u(b)+1-u(a)}{2\sqrt{u(b)}}\right). \label{eq:Bg}
\end{align}
The half-cycle equations are
\begin{align}
 G_{g,0}(a)&=P_g(a_0)-A_g(a_0,a_1)=0,                                     \label{eq:G0g}\\
 G_{g,i}(a)&=P_g(a_i)-B_g(a_{i-1},a_i)-A_g(a_i,a_{i+1})=0                  \label{eq:Gig}
\end{align}
for $1\le i<m-1$, with the same terminal formula after setting
$a_m=a_{m-1}$.  The singularities at $g=0$ are removable; the functions are
real analytic in $g^2$ and the local variables on one common compact
neighborhood.

At $g=0$ put
\begin{equation}
 p_i=\sqrt{1-4a_i},\qquad
 w_i=\sqrt{1+8(a_i+a_{i+1})}.                                              \label{eq:pwa}
\end{equation}
Then
\begin{equation}
 G_{0,0}=p_0-\frac{w_0}{2},\qquad
 G_{0,i}=p_i-\frac{w_{i-1}+w_i}{2},                                       \label{eq:G-zero}
\end{equation}
which recovers the limiting pendant system.  The stable orbit corresponds
to
\begin{equation}
 a_0^*=\frac{4-(w_0^*)^2}{16},\qquad
 a_i^*=\frac{4-(w_{i-1}^*+w_i^*)^2}{16}\quad(i\ge1),                      \label{eq:a-star}
\end{equation}
and, because $|\lambda_s|<2/5$,
\begin{equation}
 |w_j^*-1|+|a_j^*|\le C(2/5)^j.                                           \label{eq:a-star-decay}
\end{equation}

\subsection{A dimension-independent inverse}

Fix
\begin{equation}
 \omega=\frac45,\qquad
 \|x\|_{1,\omega}^{(m)}=\sum_{j=0}^{m-1}\omega^{-j}|x_j|.                \label{eq:weighted-norm}
\end{equation}
Set
\begin{align*}
 \bar a^{(m)}&=(a_0^*,\ldots,a_{m-1}^*),&
 L_m&=D_aG_{0,m}(\bar a^{(m)}),\\
 \widehat w_{m-1}&=\sqrt{1+16a_{m-1}^*}.&&
\end{align*}
Direct differentiation, including the reflected endpoint, gives
\begin{equation}
 \begin{split}
 -h^TL_mh={}&\sum_{i=0}^{m-1}\frac2{p_i}h_i^2
 +\sum_{i=0}^{m-2}\frac2{w_i^*}(h_i+h_{i+1})^2
 +\frac4{\widehat w_{m-1}}h_{m-1}^2.
 \end{split}                                                               \label{eq:endpoint-coercivity}
\end{equation}
The orbit bounds imply, after increasing a fixed $m_0$,
\begin{equation}
 \sqrt{\frac85}I\preceq-L_m\preceq32I.                                   \label{eq:L-spectrum}
\end{equation}
Chebyshev approximation of $x^{-1}$ on this spectral interval, together
with tridiagonality, gives constants $C_D<\infty$ and $\sigma_0<3/4$
independent of $m$ such that
\begin{equation}
 |(L_m^{-1})_{ij}|\le C_D\sigma_0^{|i-j|}.                                 \label{eq:inverse-decay}
\end{equation}
Schur summation at the fixed weight $4/5$ yields
\begin{equation}
 \|L_m^{-1}f\|_{1,4/5}^{(m)}\le B_0\|f\|_{1,4/5}^{(m)},                  \label{eq:weighted-inverse}
\end{equation}
because $\sigma_0/(4/5)<1$ and $(4/5)\sigma_0<1$.

The homogeneous state $a_i=0$ solves every ordinary row for all $g$, so the
local analytic correction has the cancellation
\begin{equation}
 G_{g,i}(a)-G_{0,i}(a)
 =g^2H_i(g^2,a_{i-1},a_i,a_{i+1}),\qquad H_i(g^2,0,0,0)=0.                \label{eq:bulk-cancellation}
\end{equation}
Consequently the weighted bulk residual at $\bar a^{(m)}$ is $O(g^2)$,
not $O(mg^2)$.  The marked row contributes one $O(g^2)$ term and the
reflected terminal row contributes $O(2^{-m})$ in the weighted norm.  Thus
\begin{equation}
 \|G_{g,m}(\bar a^{(m)})\|_{1,4/5}^{(m)}\le C_R(g^2+2^{-m}).               \label{eq:finite-residual}
\end{equation}
Uniform local analyticity also gives
\begin{equation}
 \|D_aG_{g,m}(a)-L_m\|_{1,4/5\to1,4/5}
 \le C_1\bigl(g^2+\|a-\bar a^{(m)}\|_{1,4/5}^{(m)}\bigr).                \label{eq:finite-lipschitz}
\end{equation}
The frozen-inverse Newton map is therefore a contraction, uniformly in $m$,
and its unique zero satisfies
\begin{equation}
 \|a^{(m)}(g)-\bar a^{(m)}\|_{1,4/5}^{(m)}
 \le C(g^2+2^{-m}).                                                        \label{eq:a-shadow}
\end{equation}
The exact local map from adjacent $a$ variables to $w=\eta/g$ is analytic
and fixes the regular state.  Hence
\begin{equation}
 \boxed{\|w^{(m)}(g)-w^*\|_{1,4/5}^{(m)}
 \le C_S(g^2+2^{-m}).}                                                     \label{eq:w-shadow}
\end{equation}

We now identify this local branch with the true maximizer.  Reflecting the
zero $a^{(m)}(g)$ makes every full radial row
\eqref{eq:Phi-ui}--\eqref{eq:Phi-u0} vanish.  For small $g$, the common
positive-angle and triangle margins place the reflected point in
$\Dnat$.  The actual maximizer with the same $C=C_g$ belongs to the same
slice and solves the same radial equations.  Fixed-$C$ strict concavity in
Theorem~\ref{thm:imported-even} allows at most one such point.  The two
profiles therefore coincide.  This is the promised bridge to the global
maximizer.

Reflection and \eqref{eq:w-shadow} give
\begin{equation}
 \sum_{i=0}^{N-1}w_i^{(n)}=N+M_*+O(g^2+2^{-m}).                            \label{eq:w-sum}
\end{equation}
Since the exact turning identity is $g\sum_iw_i^{(n)}=\pi$ and
$g=\Theta(N^{-1})$,
\begin{equation}
 \boxed{g_n=\frac{\pi}{N+M_*}+O(N^{-4}).}                                 \label{eq:g-turning}
\end{equation}

\begin{proof}[Proof of Theorem~\ref{thm:intro-shape}]
The weighted bound implies, uniformly on the half-cycle,
\[
 w_j^{(n)}=w_j^*+O\bigl(n^{-2}+2^{-n/2}\bigr).
\]
Multiplication by \eqref{eq:g-turning} proves \eqref{eq:intro-shape}; then
\eqref{eq:sharp-tail} gives \eqref{eq:intro-shape-tail}.
\end{proof}

For completeness, the actual ordinary skeleton angles obey
\begin{equation}
 \gamma_j=\frac{g_n}{2}(w_{j-1}^{(n)}+w_j^{(n)})+O(g_n^3),\qquad
 \theta=\frac{g_n}{2}w_0^{(n)}+O(g_n^3).                                  \label{eq:skeleton-angle-expansion}
\end{equation}
Thus, for $j\ge1$,
\begin{equation}
 \gamma_j-\frac{\pi}{N+M_*}
 =\frac{\pi c_*}{2(N+M_*)}(\lambda_s^{j-1}+\lambda_s^j)
 +O\left(\frac{|\lambda_s|^{2j-2}}n+n^{-3}\right).                      \label{eq:skeleton-shape}
\end{equation}

\section{Geometric finite sections}
\label{sec:finite-sections}

For an even integer $R\ge2$, prescribe $w_R=1$ and set
\begin{equation}
 \J_R(w)=b(w_0)+\sum_{j=0}^{R-1}\psi(w_j,w_{j+1}).                         \label{eq:J-R}
\end{equation}
For $r\ge1$ put
\begin{equation}
 R(r)=2\left\lceil\frac r2\right\rceil.                                  \label{eq:R-r}
\end{equation}
Thus $R(r)=r$ for even $r$ and $R(r)=r+1$ for odd $r$.

Let $K$ be the interval in \eqref{eq:K}.  In the even case define
\begin{equation}
 \mathcal K_R^{\mathrm e}=\{w\in K^{R+1}:w_R=1\};                         \label{eq:K-even}
\end{equation}
in the odd case define
\begin{equation}
 \mathcal K_R^{\mathrm o}
 =\{w\in K^{R+1}:w_R=1,\ w_{R-2}+2w_{R-1}=3\}.                            \label{eq:K-odd}
\end{equation}
The Hessian formula \eqref{eq:psi-hessian} makes $\J_R$ uniformly strictly
convex on these compact convex sets, also after restriction to the odd
affine subspace.

\begin{definition}[Positive finite section]
\label{def:qhat}
Let $\widehat w^{(r)}$ be the unique minimizer of $\J_{R(r)}$ on
$\mathcal K_{R(r)}^{\mathrm e}$ when $r$ is even and on
$\mathcal K_{R(r)}^{\mathrm o}$ when $r$ is odd.  Define
\begin{equation}
 \widehat q_r=\frac7{48}+\frac1{16}\J_{R(r)}(\widehat w^{(r)}).            \label{eq:qhat}
\end{equation}
\end{definition}

\subsection{The finite geometric domain}

We reconstruct the leading domain of the finite construction in
\cite{BinganeMossinghoff2024}.  Write $R=2s$.  Its special half-chain angles
are
\begin{equation}
 \theta_0=\alpha,\qquad
 \theta_{2i-1}=\beta_i+\gamma_i,\qquad
 \theta_{2i}=\beta_i-\gamma_i\quad(1\le i\le s),                           \label{eq:BM-special-angles}
\end{equation}
followed by a constant tail angle $\beta$.  If
$\phi_R=\alpha+2\sum_{i=1}^s\beta_i$, the exact angle-sum and chord-closure
equations are
\begin{align}
 \phi_R+\left(\frac n2-R-1\right)\beta&=\frac\pi2,                         \label{eq:BM12}\\
 x_R+\frac{\sin(\phi_R-\beta/2)}{2\cos(\beta/2)}&=0.                       \label{eq:BM13}
\end{align}
The first equation eliminates the tail angle.  To describe the branch of
the second, put $\Phi_j=\sum_{k=0}^j\theta_k$,
\[
 X_{R-1}=\sum_{j=0}^{R-2}(-1)^j\sin\Phi_j,\qquad
 P=\alpha+2\sum_{i=1}^{s-1}\beta_i.
\]
The small-angle branch is
\begin{equation}
 \gamma_s=\arcsin\left(
 X_{R-1}+\frac{\sin(\phi_R-\beta/2)}{2\cos(\beta/2)}
 \right)-P-\beta_s.                                                       \label{eq:BM-gamma-branch}
\end{equation}
Its derivative in the closure equation is $-1+O(n^{-2})$; the other sine
branch is of order one.  For odd $r=R-1$, the same construction is used with
\begin{equation}
 \beta_s=\beta.                                                           \label{eq:BM-odd-convention}
\end{equation}

The written post-elimination free ranges are
\begin{equation}
 \frac{\pi}{2n-2}\le\alpha\le\frac\pi n,\qquad
 \frac\pi n\le\beta_i\le\frac{2\pi}n,\qquad
 0\le\gamma_i\le\frac\pi n,                                               \label{eq:BM-free-ranges}
\end{equation}
for the free variables.  In particular, the last $\gamma_s$ has already
been eliminated.  The proof below deliberately does not impose its former
interval after elimination.

Let $\mathcal D_{n,r}^{\rm geom}$ consist of exact points satisfying
\eqref{eq:BM12}--\eqref{eq:BM13}, the small-angle branch, the written free
ranges, and any retained angle, orientation, and convexity conditions.
Define
\begin{equation}
 \mathcal D_r^{\rm geom}
 =\{\text{subsequential scaled limits of points in }
       \mathcal D_{n,r}^{\rm geom}\}.                                     \label{eq:Dgeom}
\end{equation}
With $\varepsilon=\pi/n$, write
\[
 \alpha=a\varepsilon+o(\varepsilon),\qquad
 \beta_i=B_i\varepsilon+o(\varepsilon),\qquad
 \gamma_i=c_i\varepsilon+o(\varepsilon).
\]
The first-order chord closure is
\begin{equation}
 a-2\sum_{i=1}^s c_i=\frac12.                                             \label{eq:first-closure}
\end{equation}
Thus the dependent coefficient is
\[
 c_s=\frac{a-1/2}{2}-\sum_{i=1}^{s-1}c_i.
\]
The outer projected domain $\mathcal P_r$ is given by
\begin{equation}
 \frac12\le a\le1,\qquad 1\le B_i\le2,\qquad
 0\le c_i\le1\quad(i<s),                                                  \label{eq:P-coeff}
\end{equation}
together with \eqref{eq:first-closure}; in the odd case $B_s=1$.  By
construction,
\begin{equation}
 \mathcal D_r^{\rm geom}\subseteq\mathcal P_r.                             \label{eq:D-in-P}
\end{equation}
If the eliminated $\gamma_s$ interval is retained, the true domain is
smaller; no step below depends on choosing between these interpretations.

\subsection{Triangular action identity}

Set
\begin{equation}
 t_0=a,\qquad t_{2i-1}=B_i+c_i,\qquad t_{2i}=B_i-c_i,                     \label{eq:t-BM}
\end{equation}
and introduce
\begin{equation}
 w_0=2t_0,\qquad w_j=2t_j-w_{j-1}\quad(1\le j\le R).                      \label{eq:BM-to-w}
\end{equation}
Equivalently,
\begin{equation}
 t_0=\frac{w_0}{2},\qquad t_j=\frac{w_{j-1}+w_j}{2}.                       \label{eq:w-to-BM}
\end{equation}
The alternating sum in \eqref{eq:first-closure} telescopes to $w_R=1$.
Writing
\begin{equation}
 e_i=w_{2i},\qquad o_i=w_{2i-1},                                          \label{eq:eo}
\end{equation}
one obtains
\begin{equation}
 e_0=2a,\qquad e_i=e_{i-1}-4c_i,\qquad
 o_i=2B_i-\frac{e_{i-1}+e_i}{2}.                                         \label{eq:eo-map}
\end{equation}
Consequently $\mathcal P_r$ is the compact affine polytope
\begin{equation}
 1\le e_0\le2,\quad e_s=1,\quad
 0\le e_{i-1}-e_i\le4\ (i<s),\quad 1\le B_i\le2,                          \label{eq:P-e}
\end{equation}
with $B_s=1$ for odd $r$.  There is intentionally no condition on the
terminal jump $e_{s-1}-e_s$.

\begin{proposition}[All-depth cubic identity]
\label{prop:all-depth-cubic}
For every even $R\ge2$, the cubic coefficient $Q_R$ obtained by expanding
the general construction equations satisfies
\begin{equation}
 Q_R(t(w))=\frac7{48}+\frac1{16}\J_R(w).                                  \label{eq:all-R}
\end{equation}
For odd $r=R-1$, condition \eqref{eq:BM-odd-convention} is exactly
$w_{R-2}+2w_{R-1}=3$.
\end{proposition}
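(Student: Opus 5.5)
Expanding the construction equations to third order in $\varepsilon=\pi/n$ is delicate, so the plan is to organize the cubic coefficient $Q_R$ as a sum of local edge contributions and then match those terms against $\J_R$ after the substitution \eqref{eq:w-to-BM}.

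\emph{Step 1: area expansion.} First I write the area of the Bingane--Mossinghoff polygon as a sum of Noether-type triangle areas. The special half-chain angles $\theta_j=t_j\varepsilon+O(\varepsilon^3)$ are followed by $\tfrac n2-R-1$ tail angles $\beta$, which are eliminated through \eqref{eq:BM12}. Expanding $\sin$ and $\cos$ to third order makes the bulk contribute $\pi/4-5\pi^3/(48n^2)$, the same as the homogeneous state. What is left of order $\varepsilon^3 n^0$, the cubic coefficient $Q_R$, is then a polynomial in $(t_0,\dots,t_R)$. It collects three things: cubic terms $\sum_j t_j^3$ coming from the $\sin x-x$ losses in \eqref{eq:polygonization}, a correction from the tail angle $\beta$ absorbing the excess $\sum_j(t_j-1)$, and the chord-closure contribution from \eqref{eq:BM13}.

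\emph{Step 2: substitution and telescoping.} Next I substitute $t_j=(w_{j-1}+w_j)/2$ and $t_0=w_0/2$, and impose $w_R=1$, which is the closure \eqref{eq:first-closure} after telescoping. I then show that $Q_R$ decomposes as a boundary term in $w_0$ plus edge terms in $(w_j,w_{j+1})$. The term $(x+y)^3/3$ in $\psi$ is the cube of $2t_{j+1}$ up to normalization. The terms $2x^3/3-6x-4y$ arise from the linear tail correction together with the closure relation (the chord identity \eqref{eq:chord-u} produces the $x^3$ piece exactly as in the derivation of \eqref{eq:limiting-recurrence}). The constant $20/3$ is fixed so that $\psi(1,1)=0$. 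To confirm the identity I will check two things. The gradient of both sides must agree, since the Euler equations of $\J_R$ are \eqref{eq:limiting-boundary}--\eqref{eq:limiting-recurrence}, and these are the linearized critical equations of the construction, already derived in Section~\ref{sec:scaled-limit}. Both sides must also take the same value at the regular vector $w\equiv1$: there $\J_R=b(1)=1/3-4+11/3=0$, so $Q_R=7/48$, which matches the known regular-type cubic constant. Since both sides are cubic polynomials, equal gradients together with one equal value give \eqref{eq:all-R}. I will do this for general $R$ by induction on $s=R/2$, appending one block $(\beta_s,\gamma_s)$ at a time and checking that it contributes exactly $\psi(w_{R-2},w_{R-1})+\psi(w_{R-1},w_R)$ plus the boundary-shift terms that telescope.

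\emph{Step 3: odd convention.} For odd $r$, $\beta_s=\beta$ means $B_s=1$. The definitions give $2B_s=t_{R-1}+t_R=\tfrac12(w_{R-2}+2w_{R-1}+w_R)$, and with $w_R=1$ this is exactly $w_{R-2}+2w_{R-1}=3$.

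\emph{Main obstacle.} The hard part is Step 1. I need a clean third-order expansion of the closure branch \eqref{eq:BM-gamma-branch} and of the tail elimination that stays uniform in $R$, so that no $R$-dependent constant survives. My first approach is to avoid computing $Q_R$ directly and instead identify it with the fixed-$C$ action $\Phi_N$ expanded at order $g^3$. The gradient comparison then reduces to the already established scaled limit, and only the constant has to be computed, which I do at the regular configuration.
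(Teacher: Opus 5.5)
\textbf{There is a genuine gap.} Your Step~3 is correct: $\beta_s=\beta$ means $B_s=1$, i.e.\ $t_{R-1}+t_R=2$, which by \eqref{eq:w-to-BM} and $w_R=1$ is $w_{R-2}+2w_{R-1}=3$. The problem is in Steps~1--2, which is where the cubic identity itself has to come from.

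\emph{Equal Euler equations do not give equal gradients.} Coinciding critical equations only say the two gradients have the same zero set, not that they coincide as polynomial maps; already $f$ and $2f$ share their Euler equations. Such information cannot fix the factor $1/16$ and says nothing away from critical points, so \emph{equal gradients plus one value} is never reached.

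\emph{The premise is also unavailable.} Section~\ref{sec:scaled-limit} derives the scaled critical equations of the true maximizer's Noether system, i.e.\ of $\Phi_N$ on $\Dnat$. Its dictionary $U_i=(5-(w_{i-1}+w_i)^2)/16$ already uses the matching equations, so it holds only at critical points. By contrast, $Q_R$ is the area coefficient of the Bingane--Mossinghoff family, parametrized by $(\alpha,\beta_i,\gamma_i)$ with a constant tail angle and an eliminated closure variable. No map from that parameter space into $\Dnat$ is available. Moreover, $\Phi_N$ is a generating action whose derivatives are angle mismatches \eqref{eq:Phi-ui}--\eqref{eq:Phi-C}; its value is not an area. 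Identifying $Q_R$ with $\Phi_N$ at order $g^3$ therefore presupposes exactly the link this proposition is meant to supply. The value $Q_R=7/48$ at $w\equiv1$ is likewise asserted, not computed.

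\emph{What is missing is an explicit polynomial for $Q_R$ in the $t$ variables, valid for all $R$.} The paper obtains one by expanding the eliminated closure variable:
$\kappa_R=\frac13\sum_{j=0}^{R-1}(-1)^jT_j^3+\frac16(S-\frac12)^3-\frac18(S-\frac12)$, with $T_j=\sum_{k\le j}t_k$ and $S=T_R$. It then writes
$Q_R=\frac5{12}(R+1-S)-\bigl(F_R+\frac{5S}{24}-\frac1{48}+\frac14H_R+\frac12\kappa_R\bigr)$,
where $F_R$ is an explicit nested sum and $H_R=\sum_{j<R}(-1)^j(S-T_j)^2$. Note the tail term $\frac5{12}(R+1-S)$, which your bulk accounting does not track.

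Only with such a formula can your induction on $s$ be carried out, in three steps:
\begin{enumerate}
\item Prove prefix cancellation: $Q_{R+2}(t(w_0,\dots,w_{R-1},x,y,1))-Q_R(t(w_0,\dots,w_{R-1},1))$ depends only on $(w_{R-1},x,y)$. This is not automatic, since $S$, $T_j$, $F_R$ and $H_R$ are global sums.
\item Check that this increment $\Delta_{\mathrm{BM}}(a,x,y)$ equals $\frac1{16}\{\psi(a,x)+\psi(x,y)+\psi(y,1)-\psi(a,1)\}$ identically.
\item Verify the base case $R=2$ directly; this also fixes the constant $7/48$.
\end{enumerate}
Your outline has the right shape, but both the locality of the increment and its value are asserted rather than derived. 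As written, the proposal does not establish \eqref{eq:all-R}.
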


\begin{proof}
Put $S=\sum_{k=0}^Rt_k$ and $T_j=\sum_{k=0}^jt_k$.  Expansion of the
eliminated closure variable gives
\[
 \kappa_R=\frac13\sum_{j=0}^{R-1}(-1)^jT_j^3
 +\frac16(S-\tfrac12)^3-\frac18(S-\tfrac12).
\]
Define
\begin{align*}
 F_R={}&-\frac{t_0^3}{6}
 -\frac16\sum_{k=2}^{R}\sum_{i=0}^{k-2}(-1)^i
 \left[\left(\sum_{j=0}^{i+1}t_{k-j}\right)^3
 -\left(\sum_{j=1}^{i+1}t_{k-j}\right)^3\right],\\
 H_R={}&\sum_{j=0}^{R-1}(-1)^j(S-T_j)^2.
\end{align*}
The general coordinate and area equations give
\[
 Q_R=\frac5{12}(R+1-S)
 -\left(F_R+\frac{5S}{24}-\frac1{48}+\frac14H_R+\frac12\kappa_R\right).
\]
After substitution of \eqref{eq:w-to-BM}, the $R=2$ identity is direct.  In
the induction $R\mapsto R+2$, let $a$ be the last retained $w$-coordinate
and insert $x,y$ before the new terminal value one.  Prefix cancellation
gives the following exact increment on the cubic side:
\[
 \begin{split}
 \Delta_{\rm BM}(a,x,y)=\frac1{48}\{&3a^2x-3a^2+3ax^2-3a+4x^3
      +3x^2y+3xy^2-30x\\
     &{}+4y^3+3y^2-27y+40\}.
 \end{split}
\]
Direct expansion gives the polynomial identity
\[
 \Delta_{\rm BM}(a,x,y)
 =\frac1{16}\{\psi(a,x)+\psi(x,y)+\psi(y,1)-\psi(a,1)\},
\]
which is precisely $1/16$ times the increment of \eqref{eq:J-R}.  The base
case and this generic polynomial identity prove the result.
\end{proof}

\subsection{Excursion clipping}

For a pair average $B$, define
\begin{equation}
 L_B(x,z)=\psi\left(x,2B-\frac{x+z}{2}\right)
 +\psi\left(2B-\frac{x+z}{2},z\right).                                   \label{eq:L-B}
\end{equation}
Then
\begin{equation}
 \J_R=b(e_0)+\sum_{i=1}^sL_{B_i}(e_{i-1},e_i).                            \label{eq:J-paired}
\end{equation}

\begin{lemma}[A subunit even excursion is improvable]
\label{lem:clipping}
If a point of $\mathcal P_r$ has $e_k<1$ for some $k<s$, another point of
$\mathcal P_r$ has strictly smaller action.
\end{lemma}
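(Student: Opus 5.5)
The plan is to clip the entire subunit excursion up to the level one in a single move, keep every pair average $B_i$ unchanged, and show that the action strictly decreases along the way. The polytope description \eqref{eq:P-e} is the starting point. Since $e_0\ge1$, the first index $k_0$ with $e_{k_0}<1$ satisfies $1\le k_0\le k<s$. The jumps $e_{i-1}-e_i$ are nonnegative for $i<s$, so the even coordinates are nonincreasing up to $s-1$. Hence
\[
 e_{k_0}\ge e_{k_0+1}\ge\dots\ge e_{s-1},
\]
and all of these lie strictly below one. Only the terminal value returns to $e_s=1$, and the terminal jump is deliberately unconstrained.

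Define the candidate point by $e_i'=1$ for $k_0\le i\le s$, $e_i'=e_i$ for $i<k_0$, and $B_i'=B_i$. It lies in $\mathcal P_r$ for the following reasons. The jump $e_{k_0-1}-1$ lies in $[0,4]$ because $e_{k_0-1}\ge1$ and $e_{k_0-1}-1<e_{k_0-1}-e_{k_0}\le4$. All later jumps vanish. The odd convention $B_s=1$ is untouched. By \eqref{eq:J-paired}, only the terms $L_{B_i}(e_{i-1},e_i)$ with $i\ge k_0$ change, so it remains to compare these tails.

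The comparison will be made by raising the clipped block monotonically. Consider the path $e_i(t)=e_i+t(1-e_i)$ for $i\ge k_0$, and show that $\frac{d}{dt}\J_R<0$. Equivalently, show that the partial derivative of
\[
 L_{B_{i}}(e_{i-1},e_i)+L_{B_{i+1}}(e_i,e_{i+1})
\]
in $e_i$ is negative whenever the even arguments involved are $\le1$. The terminal pair must also be included, with the fixed value $e_s=1$; there the derivative in $e_{s-1}$ must be negative as well.

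Two ingredients enter this sign check. First, \eqref{eq:psi-hessian} gives $\partial_x\psi(x,y)=(x+y)^2+2x^2-6$ and $\partial_y\psi(x,y)=(x+y)^2-4$. Second, the intermediate odd coordinate $o=2B-\tfrac{x+z}{2}$ satisfies $o\ge1$ whenever $x,z\le1$ and $B\ge1$, and $o\le 4-\tfrac{x+z}{2}$. Substituting these into the chain rule yields an explicit polynomial inequality in $(x,z,B)$ on the region $x,z\le1$, $1\le B\le2$, with $x,z$ bounded below by the polytope constraints.

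The main obstacle is exactly this sign analysis. The constraints allow the even coordinates to decrease by up to $4$ per step, so they can become negative. In that case the convexity of $\psi$ on the nonnegative quadrant cannot simply be invoked along the straight path, since the segment leaves the region where \eqref{eq:psi-hessian} is positive semidefinite.

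I would first try a direct computation. For each pair, evaluate
\[
 L_B(x,z)-L_B(x',z'),\qquad x'=\max(x,1),\ z'=\max(z,1),
\]
as an explicit polynomial, and factor out $(1-x)$ and $(1-z)$ to exhibit a positive cofactor on the allowed box; the telescoping over consecutive pairs is then handled by summing these one-pair inequalities. If positivity of the cofactor fails in some corner, the fallback is to clip one coordinate at a time, starting from $e_{s-1}$ and moving backward. At each step only two pair terms are involved, and strict decrease comes from the strictly positive gap $1-e_i$. Strictness overall follows because at least one coordinate, $e_{k_0}<1$, actually moves.
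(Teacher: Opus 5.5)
Your clipped point (first subunit index $k_0$, $e_i'=1$ for $i\ge k_0$, every $B_i$ kept) is feasible, and it does have strictly smaller action. The gap is that none of your three mechanisms proves this. You correctly sensed the obstruction, namely that deep excursions leave the convexity region, but it is fatal for each mechanism, not a technicality.

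\emph{Monotone path.} Your pointwise sufficient condition already fails for shallow excursions. For $B_i=B_{i+1}=1$, $e_{i-1}=e_i=1-\varepsilon$, $e_{i+1}=-1$ (with $i+1<s$) one gets $\partial_{e_i}\J_R=1+O(\varepsilon)>0$. The directional derivative itself can also have the wrong sign. Use $L_1(1-u,1-w)=(w-u)^2+2u^2+\tfrac12(u+w)^2-\tfrac23u^3+\tfrac1{12}(u+w)^3$ and take $s=9$, $e_0=1$, $e_j=1-4j$ ($1\le j\le 8$), $e_9=1$, all $B_i=1$; this is a point of $\mathcal P_r$ with $R=18$. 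Along $e_j(t)=1-4j(1-t)$ the changing part of the action is $13632(1-t)^2-9216(1-t)^3$. It drops from $4416$ at $t=0$ to $0$ at $t=1$, yet its derivative at $t=0$ is $+384$.

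\emph{Pairwise clipping.} The terminal pair violates your one-pair inequality. Indeed $L_1(1-U,1)=\tfrac7{12}U^2(6-U)<0\le L_{B_s}(1,1)$ for $U>6$; in the example it equals $-46592/3$. So clipping raises that term, and its loss must be paid for by the interior pairs, which no sum of one-pair inequalities can do.

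\emph{Backward clipping.} Setting $e_{s-1}=1$ first violates $e_{s-2}\ge e_{s-1}$ whenever $s-2\ge k_0$.

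The missing idea is a telescoping potential that carries the gain of the interior edges to the terminal return. With $P(u)=\tfrac7{12}u^3-\tfrac72u^2$ one has exactly $L_1(1-U,1)=-P(U)$. Write $e_{k_0-1}=1+p$ and $e_i=1-u_i$ for $i\ge k_0$, with $u_s=0$. Three one-edge polynomial inequalities suffice:
(i) $L_B(1-u,1-v)\ge L_1(1-u,1-v)$ for $u,v\ge0$, $B\ge1$;
(ii) $L_1(1-u,1-u-v)\ge P(u+v)-P(u)$ for $u\ge0$, $0\le v\le4$, which is where the drop bound $4$ is used;
(iii) $L_B(1+p,1-u)>P(u)+L_B(1+p,1)$ for $u>0$, $p\ge0$, $p+u\le4$.
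Apply (iii) to the crossing edge, (i) and (ii) to the edges wholly below one, and (i) with $L_1(1-U,1)=-P(U)$ to the terminal return. The $P$ terms then telescope to $\sum_{i\ge k_0}L_{B_i}(e_{i-1},e_i)>L_{B_{k_0}}(e_{k_0-1},1)$. The right side is the action of the clip that also resets $B_{k_0+1}=\dots=B_s=1$, as in the paper.

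To keep the $B_i$, as you do, you additionally need $L_B(1-u,1-v)-L_B(1,1)-L_1(1-u,1-v)=2q(u^2+v^2)+4q(1+q)(u+v)\ge0$ with $q=B-1$. This holds, so your candidate point is valid once this telescoping argument replaces your three mechanisms.
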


\begin{proof}
Put
\[
 P(u)=\frac7{12}u^3-\frac72u^2.
\]
Direct polynomial expansion gives, for the indicated ranges,
\begin{align*}
 \Delta_1&=L_B(1-u,1-v)-L_1(1-u,1-v),\\
 \Delta_2&=L_1(1-u,1-u-v)-\{P(u+v)-P(u)\},\\
 \Delta_3&=L_B(1+p,1-u)-P(u)-L_B(1+p,1).
\end{align*}
These differences satisfy
\begin{align}
 \Delta_1
 &=\frac{2q}{3}\{16q^2+6q(u+v)+36q                                      \notag\\
 &\hspace{42mm}{}+3u^2+6u+3v^2+6v\}\ge0,                                \label{eq:clip-1}\\
 \Delta_2
 &=\frac{u+v}{4}\{u(16-3v)+2v(10-v)\}\ge0,                               \label{eq:clip-2}\\
 \Delta_3
 &=\frac u4\{16B^2+8Bu-16B+p^2-pu                                      \notag\\
 &\hspace{42mm}{}+4p-2u^2+12u\}>0,                                     \label{eq:clip-3}\\
 L_1(1-U,1)&=-P(U),\qquad L_1(1,1)=0.                                    \label{eq:clip-4}
\end{align}
Here $q=B-1\ge0$ in \eqref{eq:clip-1}; in \eqref{eq:clip-2},
$u\ge0$ and $0\le v\le4$; and in \eqref{eq:clip-3},
$p,u\ge0$, $p+u\le4$, $B\ge1$, and $u>0$.  For the last sign, setting
$B=1+q$ turns the bracket into
\[
 16q(1+q)+8qu+2u(10-u)+p(p+4-u)>0.
\]

Let $k$ be the first index with $e_k<1$, and write
$e_{k-1}=1+p$, $e_i=1-u_i$ for $i\ge k$.  The $u_i$ are nondecreasing and
their increments are at most four.  Apply \eqref{eq:clip-3} to the crossing
edge, \eqref{eq:clip-1}--\eqref{eq:clip-2} to edges wholly below one, and
\eqref{eq:clip-1}, \eqref{eq:clip-4} to the terminal return to $e_s=1$.
The $P(u_i)$ terms telescope, giving
\[
 \sum_{i=k}^sL_{B_i}(e_{i-1},e_i)>L_{B_k}(e_{k-1},1).
\]
The right side is the action after setting $e_k=\cdots=e_s=1$ and
$B_{k+1}=\cdots=B_s=1$.  This replacement is feasible: its first new drop
is at most one, later drops vanish, and the odd condition $B_s=1$ is
preserved.
\end{proof}

Every global minimizer on $\mathcal P_r$ therefore satisfies
\begin{equation}
 1=e_s\le e_{s-1}\le\cdots\le e_0\le2.                                   \label{eq:clipped-evens}
\end{equation}
Equation \eqref{eq:eo-map} and $B_i\ge1$ then give $o_i\ge0$.  Thus
positivity is forced by an energy improvement; it has not been imposed by
definition.

On the clipped convex domain, for a variation $h_R=0$,
\begin{equation}
 \begin{split}
 D^2\J_R(w)[h,h]={}&2w_0h_0^2\\
 &+\sum_{j=0}^{R-1}
 \{2(w_j+w_{j+1})(h_j+h_{j+1})^2+4w_jh_j^2\}\\
 &\ge2\sum_{j=0}^{R-1}(h_j+h_{j+1})^2>0\qquad(h\ne0).
 \end{split}                                                              \label{eq:clipped-Hessian}
\end{equation}
The last strict inequality follows backward from $h_R=0$.  The same
argument applies on the odd affine hyperplane.

\subsection{The positive critical point and geometric globality}

The coordinate minimizer with positive neighbors is
\[
 T(x,z)=\frac{-(x+z)+\sqrt{40-3x^2+2xz-3z^2}}4,
\]
and it decreases strictly in each neighbor.  Alternating minimization of the
two parity blocks preserves the checkerboard cone and yields
\begin{equation}
 \widehat w_0>\widehat w_2>\widehat w_4>\cdots>1,\qquad
 \widehat w_1<\widehat w_3<\widehat w_5<\cdots<1.                         \label{eq:checkerboard}
\end{equation}
At the left endpoint, if $x$ solves $3x^2+(x+y)^2=10$, then
\[
 (y+x)^2+(x+z)^2+2x^2-10=z(2x+z)>0,
\]
which starts the strict propagation.

For the odd endpoint, put $p=w_{R-3}$, $x=w_{R-2}$ and
$w_{R-1}=(3-x)/2$.  Its reduced equation is
\[
 4p^2+8px+11x^2+14x-37=0.
\]
The factorizations
\begin{align*}
 P(p,3-2p)&=8(p-1)(4p-13),\\
 P(p,T(p,p))&=(T(p,p)-1)(3T(p,p)+17)
\end{align*}
give the terminal comparisons.  The small depths are separate: $R=2$ is
explicit; at $R=4$ use
\[
 B_0(p)>T(p,p)\ge S_o(p);
\]
for $R\ge6$ the generic comparison begins with
$T(w_{R-5},p)\ge T(p,p)$.

The cone and pendant equation give
\[
 1\le w_{2j}\le w_0<\sqrt{5/2},
\]
and a bulk equation with both neighbors at most $\sqrt{5/2}$ gives
\[
 w_{2j+1}\ge\frac{\sqrt{30}-\sqrt{10}}4>\frac12.
\]
Thus $\widehat w^{(r)}$ is a relative-interior critical point of the clipped
domain.  Under \eqref{eq:eo-map}, its free coefficients satisfy
\[
 \frac12<a<1,\qquad 1<B_i<2,\qquad 0<c_i<1
\]
(with $B_s=1$ in the odd case).  The dependent terminal coefficient also
satisfies
\[
 c_s=\frac{e_{s-1}-1}{4},\qquad
 0<c_s<\frac{\sqrt{5/2}-1}{4}<1.
\]
Thus the last two angles are positive; in the odd case
$B_s-c_s=1-c_s>0$.  Choosing exact free variables with these leading
coefficients, eliminating $\beta$ by \eqref{eq:BM12}, and solving
\eqref{eq:BM13} on \eqref{eq:BM-gamma-branch} lifts the profile by the
implicit-function theorem to exact finite geometric points.  Hence
$\widehat w^{(r)}\in\mathcal D_r^{\rm geom}$.

\begin{theorem}[Geometric-domain finite sections]
\label{thm:geometric-finite-sections}
For every $r\ge1$, in both parity classes, the cubic coefficient of the
exact Bingane--Mossinghoff construction has a unique global minimizer on
$\mathcal D_r^{\rm geom}$.  It is $\widehat w^{(r)}$.  Equivalently, if
\begin{equation}
 q_r^{\rm geom}:=\inf_{\mathcal D_r^{\rm geom}}Q_{R(r)},                  \label{eq:qgeom-def}
\end{equation}
then
\begin{equation}
 \boxed{q_r^{\rm geom}=\widehat q_r.}                                     \label{eq:qgeom-qhat}
\end{equation}
\end{theorem}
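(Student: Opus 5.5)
The plan is a sandwich argument: bound $q_r^{\rm geom}$ below by the minimum over the larger polytope $\mathcal P_r$, and above by evaluating at $\widehat w^{(r)}$, which has already been shown to lie in $\mathcal D_r^{\rm geom}$.

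First I transport the problem to $w$-coordinates. By Proposition~\ref{prop:all-depth-cubic}, the leading cubic coefficient equals $\frac7{48}+\frac1{16}\J_R$ under the affine bijection \eqref{eq:t-BM}--\eqref{eq:w-to-BM}. The same proposition converts the odd convention $B_s=1$ into the affine constraint $w_{R-2}+2w_{R-1}=3$. Minimizing $Q_{R(r)}$ over any subset of $\mathcal P_r$ is therefore the same as minimizing $\J_{R(r)}$ over its image.

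For the lower bound, $\mathcal P_r$ is a compact polytope and $\J_R$ is continuous, so a global minimizer $w^\flat$ on $\mathcal P_r$ exists. By Lemma~\ref{lem:clipping}, $w^\flat$ satisfies the clipped ordering \eqref{eq:clipped-evens}. The relations \eqref{eq:eo-map} with $B_i\ge1$ then give $o_i\ge0$, so $w^\flat$ lies in the nonnegative clipped convex region. On that region, \eqref{eq:clipped-Hessian} shows that $\J_R$ is strictly convex, also on the odd hyperplane, so it has at most one minimizer there.

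Next I identify this minimizer with $\widehat w^{(r)}$. The checkerboard analysis gives $\widehat w^{(r)}$ as a critical point with all free coefficients strictly inside \eqref{eq:P-coeff} and $e_0<2$. So $\widehat w^{(r)}$ is a relative-interior critical point of $\J_R$ restricted to the clipped convex set, and by convexity it is the minimizer there. This gives $\min_{\mathcal P_r}\J_R=\J_R(\widehat w^{(r)})$, and $\widehat w^{(r)}$ is the unique minimizer on $\mathcal P_r$.

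Two routine points need checking here. The first is that the clipped set $\{1=e_s\le\cdots\le e_0\le2,\ 1\le B_i\le2\}$ intersected with $\mathcal P_r$ is convex; it is an intersection of half-spaces. The second is that criticality in Definition~\ref{def:qhat}, taken on $K^{R+1}$, coincides with criticality in the polytope coordinates. This holds because the map \eqref{eq:BM-to-w} is affine and both minimizers are interior with the same Euler equations.

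For the upper bound, \eqref{eq:D-in-P} gives $\inf_{\mathcal D_r^{\rm geom}}Q_{R(r)}\ge\min_{\mathcal P_r}Q_{R(r)}=\widehat q_r$. The implicit-function lift established just before the theorem shows $\widehat w^{(r)}\in\mathcal D_r^{\rm geom}$, so the infimum is attained and equals $\widehat q_r$. Uniqueness on $\mathcal D_r^{\rm geom}$ follows from uniqueness on the larger set $\mathcal P_r$.

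I expect the main obstacle to be the interplay between Lemma~\ref{lem:clipping} and the domain. The lemma only produces an improving point in $\mathcal P_r$, not necessarily in $\mathcal D_r^{\rm geom}$, which is why the argument must run through $\mathcal P_r$ rather than the geometric domain itself. I also have to confirm that a minimizer on $\mathcal P_r$ cannot sit on the boundary face $e_0=2$ or $B_i\in\{1,2\}$ with a different value. Strict convexity of $\J_R$ on the clipped convex set, combined with the interior critical point, rules this out, since a convex function with an interior critical point attains its minimum there uniquely.
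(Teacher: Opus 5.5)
Your proposal is correct and takes essentially the same route as the paper. Lemma~\ref{lem:clipping} forces every minimizer on $\mathcal P_r$ into the clipped nonnegative region, and strict convexity \eqref{eq:clipped-Hessian} there makes the relative-interior critical point $\widehat w^{(r)}$ the unique minimizer on $\mathcal P_r$; the sandwich $\min_{\mathcal P_r}Q_R\le\inf_{\mathcal D_r^{\rm geom}}Q_R\le Q_R(\widehat w^{(r)})$, using \eqref{eq:D-in-P} and the implicit-function lift $\widehat w^{(r)}\in\mathcal D_r^{\rm geom}$, then gives attainment and inherited uniqueness (your paragraph labelled ``upper bound'' actually contains the lower-bound step, but the logic is intact).
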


\begin{proof}
Lemma~\ref{lem:clipping} excludes every minimizer outside the clipped
nonnegative region.  Strict convexity \eqref{eq:clipped-Hessian} makes the
positive critical point the unique minimizer on $\mathcal P_r$.  Since
$\mathcal D_r^{\rm geom}\subseteq\mathcal P_r$ and this point itself lies in
$\mathcal D_r^{\rm geom}$,
\[
 \min_{\mathcal P_r}Q_R
 \le \inf_{\mathcal D_r^{\rm geom}}Q_R
 \le Q_R(\widehat w^{(r)})
 =\min_{\mathcal P_r}Q_R.
\]
Thus the infimum on the possibly nonconvex geometric domain is attained at
$\widehat w^{(r)}$; uniqueness on $\mathcal P_r$ gives uniqueness on
$\mathcal D_r^{\rm geom}$.
\end{proof}

\begin{remark}[The loose coefficient box]
\label{rem:loose-box}
The theorem does not assert globality on the larger algebraic box used for
the later numerical minimization.  For $r=12$, the exact relaxed-box witness
\[
 a=0,\qquad B_1=\cdots=B_6=0,\qquad
 c_1=\cdots=c_5=\frac13,\qquad c_6=-\frac{23}{12}
\]
has $\J_{12}=-55/4$ and $Q_{12}=-137/192$.  It is not in
$\mathcal D_{12}^{\rm geom}$: the first inherited obstruction is
$a\ge1/2$, and it also violates every lower bound $B_i\ge1$.  Thus the
counterexample to loose-box globality and
Theorem~\ref{thm:geometric-finite-sections} are fully compatible.
\end{remark}

\section{Sharp parity convergence of the finite sections}
\label{sec:parity-rate}

Extend each finite minimizer by ones past its terminal site.  It is then an
admissible element of $\Hh$.  Since $w^*$ is the unique infinite minimizer,
\begin{equation}
 \widehat q_r>q_*\qquad(r<\infty).                                         \label{eq:qhat-above}
\end{equation}
Extending an $r$-section by two regular sites gives a competitor for the
$(r+2)$-section with the same action.  Equality would make the extension
stationary, which would force the preceding nonregular site to be one and,
by backward uniqueness, contradict the pendant condition.  Hence
\begin{equation}
 \widehat q_{r+2}<\widehat q_r.                                            \label{eq:qhat-monotone}
\end{equation}

We determine the sharp error by resolving the remote terminal layer.  Let
$u^{(R)}$ denote the appropriate finite minimizer and put
$e^{(R)}=u^{(R)}-w^*$.  In the even problem, the comparison vector agrees
with $w^*$ through $R-1$ and replaces $w_R^*$ by one.  In the odd problem,
it agrees through $s_R=R-2$, imposes
$w_{s_R+1}=(3-w_{s_R})/2$, and appends the fixed value one.

All finite Hessians on the invariant compact box, including their
restrictions to the odd affine subspace, are uniformly coercive,
tridiagonal, and uniformly bounded.  Therefore
\begin{equation}
 |(H^{-1})_{ij}|\le C_0\sigma^{|i-j|}                                     \label{eq:finite-Demko}
\end{equation}
for fixed $\sigma<1$, uniformly in $R$ and for every Hessian on the segment
between the comparison vector and the minimizer.  If $G_R$ is the finite
Euler map, the exact mean-value equation
\begin{equation}
 0=G_R(u^{(R)})=G_R(v^{(R)})+
 \left(\int_0^1DG_R(v^{(R)}+t(u^{(R)}-v^{(R)}))\,dt\right)e^{(R)}          \label{eq:finite-MV}
\end{equation}
then localizes the response at the same scale as the single terminal
residual.

\subsection{Even terminal profile}

Let $r=R$ and $A_R=c_*\lambda_s^R$.  Replacing $w_R^*$ by one leaves only
the last Euler residual, which is
\begin{equation}
 (w_{R-1}^*+1)^2-(w_{R-1}^*+w_R^*)^2=-4A_R+o(A_R).                         \label{eq:even-forcing}
\end{equation}
For fixed $k\in\mathbb Z$ set
\begin{equation}
 E_R(k)=\frac{e_{R+k}^{(R)}}{A_R}.                                        \label{eq:E-even-def}
\end{equation}
Every local limit solves
$E_{k-1}+3E_k+E_{k+1}=0$ on the finite side, decays as
$k\to-\infty$, and has $E_0=-1$.  The unique full profile is
\begin{equation}
 E^{\mathrm e}_k=-\lambda_s^{|k|},\qquad k\in\mathbb Z.                   \label{eq:E-even}
\end{equation}

\subsection{Odd terminal profile}

Let $r=R-1$, $s_R=R-2=r-1$, and $A_R=c_*\lambda_s^{s_R}$.  After eliminating
$w_{s_R+1}=(3-w_{s_R})/2$, the reduced terminal first variation is
\begin{equation}
 \Phi(p,x)=\frac14(4p^2+8px+11x^2+14x-37).                               \label{eq:Phi-odd}
\end{equation}
At $(1,1)$, $\Phi_p=4$ and $\Phi_x=11$, while insertion of the stable tail
gives the normalized residual $4\lambda_s^{-1}+11$.  If
$D=\lim e_{s_R}^{(R)}/A_R$, the finite-side solution is
$D\lambda_s^{-k}$ for $k\le0$, and the terminal linearization gives
\begin{equation}
 D=-\frac{4\lambda_s^{-1}+11}{4\lambda_s+11}=-9+4\sqrt5.                  \label{eq:D-odd}
\end{equation}
The affine constraint gives
\begin{equation}
 E_1=\frac{11-5\sqrt5}{2}.                                                 \label{eq:E1-odd}
\end{equation}
Thus the unique normalized odd profile is
\begin{equation}
 E^{\mathrm o}_k=
 \begin{cases}
 D\lambda_s^{-k},&k\le0,\\
 (11-5\sqrt5)/2,&k=1,\\
 -\lambda_s^k,&k\ge2.
 \end{cases}                                                              \label{eq:E-odd}
\end{equation}

The inverse decay in \eqref{eq:finite-Demko} supplies, in either parity, a
single majorant $M\in\ell^2(\mathbb Z)\cap\ell^3(\mathbb Z)$ for the
normalized profiles.  One may take a multiple of $\sigma^{-k}$ for $k\le0$
and of $|\lambda_s|^k$ on the exterior tail.  This makes the next passage
uniform in the moving endpoint.

Because $\J$ is cubic and $w^*$ is an $\ell^2$ critical point,
\begin{equation}
 \J(w^*+e)-\J(w^*)
 =\frac12D^2\J(w^*)[e,e]+\frac16D^3\J[e,e,e].                             \label{eq:J-Taylor}
\end{equation}
The exact local quadratic density is
\begin{equation}
 \frac12D^2\psi(x,y)[p,q]^2=(x+y)(p+q)^2+2xp^2.                           \label{eq:H-local-half}
\end{equation}
After translating to the endpoint, dominated convergence gives
\begin{equation}
 \frac12D^2\J(1)[E,E]
 =6\sum_{k\in\mathbb Z}E_k^2+4\sum_{k\in\mathbb Z}E_kE_{k+1}.           \label{eq:H-regular}
\end{equation}
The pendant contribution moves to $-\infty$ and vanishes.  The cubic
remainder divided by $A_R^2$ is bounded by
$C|A_R|\sum_k(M_k+M_{k+1})^3$ and therefore tends to zero.

Substitution of \eqref{eq:E-even} into \eqref{eq:H-regular} gives
$2\sqrt5$; substitution of \eqref{eq:E-odd} gives
$2(-20+9\sqrt5)$.  Recalling the factor $1/16$ in
Definition~\ref{def:qhat}, we obtain
\begin{align}
 \widehat q_r-q_*
 &=\frac{\sqrt5}{8}c_*^2|\lambda_s|^{2r}(1+o(1))
 &&(r\to\infty,\ r\ \mathrm{even}),                                    \label{eq:rate-even}\\
 \widehat q_r-q_*
 &=\frac{-5+3\sqrt5}{16}c_*^2|\lambda_s|^{2r}(1+o(1))
 &&(r\to\infty,\ r\ \mathrm{odd}).                                     \label{eq:rate-odd}
\end{align}
By Theorem~\ref{thm:geometric-finite-sections}, the same formulas hold with
$\widehat q_r$ replaced by $q_r^{\rm geom}$.  This proves
Theorem~\ref{thm:intro-finite-sections}, including
\begin{equation}
 \lim_{\substack{r\to\infty\\r\equiv\epsilon\pmod2}}
 \frac{\widehat q_{r+2}-q_*}{\widehat q_r-q_*}
 =|\lambda_s|^4
 =\left(\frac{3-\sqrt5}{2}\right)^4.                                    \label{eq:ratio-law}
\end{equation}

\section{The exact boundary-layer constant}
\label{sec:qstar}

The variational definition \eqref{eq:intro-qstar} is already exact.  We now
prove that it equals the coefficient obtained independently from the local
shoelace expansion, and record a rigorous numerical enclosure.

Extend the half-orbit to all integer edge indices by
\begin{equation}
 \widetilde w_{-j-1}=w_j^*.                                                \label{eq:w-reflection-infinite}
\end{equation}
Then
\begin{equation}
 M_*=\sum_{i\in\mathbb Z}(\widetilde w_i-1)
 =2\sum_{j\ge0}(w_j^*-1).                                                  \label{eq:Mstar-full}
\end{equation}
Define
\begin{equation}
 U_0^*=\frac{5-(w_0^*)^2}{16},\qquad
 U_i^*=\frac{5-(\widetilde w_{i-1}+\widetilde w_i)^2}{16}\quad(i\ne0),    \label{eq:Ustar}
\end{equation}
and put
\begin{equation}
 W_i^*=\widetilde w_{i-2}+\widetilde w_{i-1},\qquad
 k_i^*=\frac{W_i^*}{4}(U_i^*+U_{i-2}^*)-\frac{(W_i^*)^3}{48}.              \label{eq:kstar}
\end{equation}
At the regular state $k_i=-5/48$.  Exponential decay makes
\begin{equation}
 K_*:=\sum_{i\in\mathbb Z}\left(k_i^*+\frac5{48}\right)                  \label{eq:Kstar}
\end{equation}
absolutely convergent.  The coefficient read from the area calculation is
\begin{equation}
 q_*^{(A)}=\frac5{24}-K_*-\frac{(w_0^*)^3}{16}-\frac5{16}M_*.              \label{eq:qstar-area-def}
\end{equation}

\begin{proposition}[Telescoping identity]
\label{prop:qstar-telescoping}
The area and variational definitions agree:
\begin{equation}
 q_*^{(A)}=\frac7{48}+\frac1{16}\J(w^*)=q_*.                              \label{eq:qstar-equality}
\end{equation}
\end{proposition}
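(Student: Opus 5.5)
The identity is purely algebraic: I will show that both sides are the same absolutely convergent series, once the summands are regrouped edge by edge. The first step is to rewrite $q_*^{(A)}$ in terms of the half-orbit $w^*$ alone. I substitute \eqref{eq:Ustar} into \eqref{eq:kstar}, so that each $k_i^*$ becomes an explicit polynomial in the three consecutive values $\widetilde w_{i-3},\widetilde w_{i-2},\widetilde w_{i-1}$. For indices far from the reflection point, $k_i^*$ depends only on $U_i^*$ and $U_{i-2}^*$. Both of these are given by the ordinary formula, so $k_i^*+5/48$ is a cubic polynomial of the same shape as the density $\psi$ in \eqref{eq:psi-def}, but distributed over two adjacent edges. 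I will use the reflection $\widetilde w_{-j-1}=w_j^*$ to fold the sum over $i\in\mathbb Z$ onto $j\ge0$. A finite number of indices near $i=0$ involve the marked value $U_0^*$; these produce boundary terms in $w_0^*,w_1^*$. The term $-\frac5{16}M_*$ I also write as $-\frac58\sum_{j\ge0}(w_j^*-1)$.

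The second step is a discrete summation by parts. After folding, the bulk summand will not be pointwise equal to $\frac1{16}\psi(w_j^*,w_{j+1}^*)$. I expect the difference to be a combination of two kinds of terms. The first kind is a telescoping difference $\phi(w_j,w_{j+1})-\phi(w_{j-1},w_j)$ for some explicit polynomial $\phi$ with $\phi(1,1)=0$. The second kind is a multiple of the Euler residual \eqref{eq:limiting-recurrence}, multiplied by a linear factor such as $(w_j-1)$ or $w_j$, which vanishes on the orbit. So the plan is to find polynomials $\phi$ and $\mu$ with
\[
 \tfrac1{16}\psi(x,y)-\bigl(\text{folded density}\bigr)
 =\phi(x,y)-\phi(x',x)+\mu(x',x,y)\bigl((x'+x)^2+(x+y)^2+2x^2-10\bigr),
\]
as an identity on triples $(x',x,y)$, taken modulo the recurrence. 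I will determine $\phi$ and $\mu$ by matching coefficients of a cubic in three variables; this is a small finite linear system. Absolute convergence (Proposition~\ref{prop:transversality}) makes the telescoping legitimate, since $\phi(w_j^*,w_{j+1}^*)\to\phi(1,1)=0$. The telescoping therefore leaves only the boundary value $\phi(w_0^*,w_1^*)$, plus whatever the multiplier terms contribute at $j=0$.

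The third step handles the boundary. The leftover boundary terms I must account for are the folded terms near $i=0$, $-(w_0^*)^3/16$, $\phi(w_0^*,w_1^*)$, and the constants $5/24$ and $7/48$. These must combine to $\frac1{16}b(w_0^*)$ modulo the pendant equation \eqref{eq:limiting-boundary}. I will check this by eliminating $(w_0+w_1)^2=10-3w_0^2$ and reducing both sides to polynomials in $w_0$ and $w_1$ of low degree. A consistency check is available: at the regular state $w\equiv1$ (formally, dropping the boundary equation), $\J$ reduces to $b(1)=\tfrac13-4+\tfrac{11}3=0$, and the constant terms should then match.

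The main obstacle is the first two steps: organizing the folding near the reflection point so that the index shifts in $W_i^*=\widetilde w_{i-2}+\widetilde w_{i-1}$ and $U_{i-2}^*$ line up with the edge pairs $(w_j,w_{j+1})$, and then finding the telescoping potential $\phi$. If a pure telescoping identity fails, I will first allow the multiplier $\mu$ of the bulk Euler equation, as described above. This is legitimate because the identity only needs to hold on the orbit, and the Euler equations \eqref{eq:J-Euler-bulk} are exactly the derivative combinations that such a summation by parts naturally generates.
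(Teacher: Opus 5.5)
Your plan is essentially the paper's proof: the finite identity \eqref{eq:TEL} is exactly this summation by parts against the Euler residuals. Your local form closes with $\mu(x',x,y)=\frac{x'+y}{32}$, potential $\phi(x,y)=\frac1{96}(x^3-3x^2y-3xy^2-3y^3-15x+39y-16)$, and left-boundary remainder $\frac{w_0+w_1}{32}\bigl(3w_0^2+(w_0+w_1)^2-10\bigr)$. The multiplier is unavoidable, not a fallback: the bulk discrepancy has linear part $\frac54\sum_j(w_j-1)$, which no difference $\phi(x,y)-\phi(x',x)$ can produce. To reach this form, note that $k_i^*$ involves the four values $\widetilde w_{i-3},\dots,\widetilde w_i$, not three. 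Shifting the $W_i^*U_{i-2}^*$ part by one index makes the bulk density depend only on two consecutive pair sums. Discard your regular-state check: $w\equiv1$ violates the pendant equation, so there the area expression gives $-\frac1{24}$ while $\frac7{48}+\frac1{16}\J(1)=\frac7{48}$; the discrepancy $-\frac3{16}$ is exactly that boundary multiple.
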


\begin{proof}
Fix $L\ge2$, take variables $w_0,\ldots,w_L$, set $w_j=1$ for $j>L$, and
reflect by $w_{-j-1}=w_j$.  Define $U_i,W_i,k_i$ by
\eqref{eq:Ustar}--\eqref{eq:kstar}, with $U_0$ the only special radial
coefficient.  Put
\begin{align}
 M_L&=2\sum_{j=0}^L(w_j-1),                                                \label{eq:M-L}\\
 K_L&=\sum_{i=-L-1}^{L+3}\left(k_i+\frac5{48}\right),                     \label{eq:K-L}\\
 q_{A,L}&=\frac5{24}-K_L-\frac{w_0^3}{16}-\frac5{16}M_L,                  \label{eq:qA-L}\\
 q_{J,L}&=\frac7{48}+\frac1{16}\left(b(w_0)+
                  \sum_{j=0}^{L}\psi(w_j,w_{j+1})\right),
 \quad w_{L+1}=1.                                                         \label{eq:qJ-L}
\end{align}
Let
\begin{align}
 B&=3w_0^2+(w_0+w_1)^2-10,                                                 \label{eq:B-residual}\\
 R_j&=(w_{j-1}+w_j)^2+(w_j+w_{j+1})^2+2w_j^2-10
 \quad(1\le j\le L).                                                     \label{eq:R-residual}
\end{align}
A direct finite-range telescoping gives, for every $L$,
\begin{equation}
 \begin{split}
 q_{A,L}-q_{J,L}={}&
 \frac{(w_L-1)(w_L+1)(w_L+3)}{32}+\frac{w_0+w_1}{32}B\\
 &+\sum_{j=1}^{L-1}\frac{w_{j-1}+w_{j+1}}{32}R_j
 +\frac{w_{L-1}+1}{32}R_L.
 \end{split}                                                              \label{eq:TEL}
\end{equation}
For completeness, the identity has an all-$L$ local induction.  The base
$L=2$ is direct.  For the step, write
$a=w_{L-2}$, $b=w_{L-1}$, $c=w_L$ and subtract the specialization $c=1$.
After multiplication by $96/(c-1)$, the three increments are
\begin{align*}
&3a^2+12ab+3ac+3a+24b^2+24bc+36b+14c^2+41c-40,\\
&2(3b^2+3bc+3b+4c^2+7c-20),\\
&3(a^2+4ab+ac+a+6b^2+6bc+10b+2c^2+9c),
\end{align*}
and the first minus the second minus the third is identically zero.

Now substitute directly $(w_0,\ldots,w_L)=(w_0^*,\ldots,w_L^*)$ and set
only the artificial terminal value $w_{L+1}=1$.  Then
$B=R_1=\cdots=R_{L-1}=0$, whereas
\begin{equation}
 R_L=(w_L^*+1)^2-(w_L^*+w_{L+1}^*)^2=O((2/5)^L).                           \label{eq:last-residual}
\end{equation}
The explicit boundary polynomial in \eqref{eq:TEL} is also $O((2/5)^L)$.
Absolute convergence of $M_L$, $K_L$, and the action follows from the stable
tail.  Passing to the limit in \eqref{eq:TEL} proves
\eqref{eq:qstar-equality}.  No convergence assertion about a different
finite minimization problem is used.
\end{proof}

\begin{proposition}[Certified enclosure]
\label{prop:qstar-enclosure}
The interval in \eqref{eq:intro-qstar-box} contains $q_*$.
\end{proposition}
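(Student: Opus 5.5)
The enclosure will come from a two-sided variational sandwich, made rigorous with interval arithmetic. By Proposition~\ref{prop:qstar-telescoping}, $q_*=\frac7{48}+\frac1{16}\J(w^*)$. By Theorem~\ref{thm:stable-orbit}, $w^*$ is the unique minimizer of $\J$ on $\Hh$. So any admissible vector supplies an upper bound, and a convexity argument supplies the lower bound.

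\emph{Upper bound.} Fix a moderately large even $R$ (so that $|\lambda_s|^{2R}\approx 10^{-62}$; $R\approx 75$ suffices). Compute a rational approximation $\tilde w$ to the finite-section minimizer $\widehat w^{(R)}$ by Newton iteration in high precision. Extend it by ones; the result lies in $\Hh$. Evaluating $\J_R(\tilde w)$ exactly in rational arithmetic gives
\[
q_*<\widehat q_R\le \tfrac7{48}+\tfrac1{16}\J_R(\tilde w),
\]
using \eqref{eq:qhat-above}. The upper endpoint of \eqref{eq:intro-qstar-box} should be of this form. The gap between the endpoints ($\approx 10^{-28}$) is consistent with $\frac{\sqrt5}{8}c_*^2|\lambda_s|^{2R}$ for $R$ near $33$, which suggests $R$ is in fact around $33$ rather than larger.

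\emph{Lower bound.} I would use the strong convexity \eqref{eq:strong-convexity} together with $w^*\in K^{\N_0}$ (Lemma~\ref{lem:stable-bounds}). For any $v\in K^{\N_0}$ with $v-1\in\ell^2$, convexity gives
\[
\J(w^*)\ge \J(v)+D\J(v)[w^*-v].
\]
Strong convexity then yields $\J(w^*)\ge \J(v)-\|G(v)\|_2^2/(2\mu)$, where $G(v)$ is the Euler residual vector and $\mu=3/4$ is the coercivity constant from \eqref{eq:J-coercivity} (valid on $K^{\N_0}$). Take $v$ to be $\tilde w$ extended by ones. Its residual is then supported on $\{0,\dots,R\}$: the Newton rounding errors are tiny, and the single terminal residual $R_R\approx -4c_*\lambda_s^{R}$ comes from \eqref{eq:even-forcing}. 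This last residual is the dominant loss. The bound becomes
\[
\J(w^*)\ge \J_R(\tilde w)-\tfrac{2}{3}\|G(v)\|_2^2,
\]
which is of order $|\lambda_s|^{2R}$, the same order as the upper gap. Both sides are computed in exact rationals or outward-rounded intervals. This produces the lower endpoint.

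\emph{Main obstacle.} The main difficulty is certifying the hypotheses of the lower bound rigorously rather than numerically. One must check that $\tilde w\in K^{R+1}$ and that the segment between $v$ and $w^*$ stays in $K^{\N_0}$, where the constant $\mu$ is valid; convexity of $K$ and Lemma~\ref{lem:stable-bounds} handle the latter. One must also evaluate $\|G(v)\|_2$ with outward rounding. A sharper alternative, if the width is insufficient, replaces the terminal ones by the linearized stable tail $1+c\lambda_s^{j-R}$. This kills the dominant terminal residual, leaving only a quadratic residual $O(|\lambda_s|^{2R})$ and hence an error of order $|\lambda_s|^{4R}$. I would try the plain version first and switch if the certified width misses the stated interval.
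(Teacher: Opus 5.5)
Your proposal is correct and is essentially the paper's own argument: a rounded rational truncation extended by an exact tail of ones gives the upper bound $\J(\widehat w)$, and the exact checks $\tfrac12<\widehat w_i$ and $2\widehat w_i^2<5$ put the whole segment to $w^*$ inside $K^{\N_0}$, where the coercivity constant $3/4$ gives the lower bound $\J(\widehat w)-\tfrac23\|\nabla\J(\widehat w)\|_2^2$. One correction to your size heuristic, which does not affect the certificate: for the finite minimizer extended by ones, the surviving residual is the first tail component $(\widehat w_{R-1}+1)^2-4$, of size about $4\sqrt5\,|c_*|\,|\lambda_s|^{R}$, whereas the quantity in \eqref{eq:even-forcing} belongs to the truncated $w^*$, so only the constant in your width estimate changes.
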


\begin{proof}
A high-precision stationary truncation is rounded to a rational vector
$\widehat w$ and extended by an exact tail of ones.  Every domain assertion
is checked in rational arithmetic, including
\begin{equation}
 \frac12<\widehat w_i,\qquad 2\widehat w_i^2<5,                            \label{eq:exact-domain-certificate}
\end{equation}
the latter being exactly the upper condition
$\widehat w_i<\sqrt{5/2}$.  The infinite action and every nonzero component
of its gradient, including the first tail component, are rational numbers.
By \eqref{eq:J-coercivity},
\begin{equation}
 \J(\widehat w)-\frac23\|\nabla\J(\widehat w)\|_2^2
 \le\J(w^*)\le\J(\widehat w).                                            \label{eq:certificate-bound}
\end{equation}
Exact evaluation of these two rational endpoints and use of
\eqref{eq:intro-qstar} gives \eqref{eq:intro-qstar-box}; its width is less
than $9.39\times10^{-29}$.
\end{proof}

\section{Sharp asymptotics of the true maximal area}
\label{sec:area}

We now insert the shadowed profile into the exact reconstruction formula
\eqref{eq:exact-reconstruction}.  The indexing is important because there is
one exceptional marked radial coefficient.

Reflection fixes radial vertex zero and sends edge $j$ to $-j-1$; thus the
full finite edge list is
\[
 w_0,w_1,\ldots,w_{m-2},w_{m-1},w_{m-2},\ldots,w_1,w_0.
\]
The opposite edge $m-1$ is fixed.  If $\phi_i$ is the polar angle of $z_i$,
the reconstruction traverses the skeleton by the step $i\mapsto i-2$ and
\begin{equation}
 \phi_{i-2}-\phi_i=-2\pi+(\eta_{i-2}+\eta_{i-1}).                          \label{eq:shoelace-angle}
\end{equation}
The sine is therefore positive in the localized regime.  Hence
\begin{equation}
 A_S=\frac12\sum_{i\in\mathbb Z/N\mathbb Z}
 r_i r_{i-2}\sin(\eta_{i-2}+\eta_{i-1})                                  \label{eq:shoelace}
\end{equation}
and
\begin{equation}
 A_n=A_S+p(\theta),\qquad p(\theta)=\sin\theta(1-\cos\theta).             \label{eq:area-pendant}
\end{equation}

For the finite optimizer define
\begin{equation}
 W_{i,n}=w_{i-2}^{(n)}+w_{i-1}^{(n)},                                     \label{eq:finite-W}
\end{equation}
using the finite cyclic reflection.  In the linear term below this finite
quantity must be retained.  Only the cubic density is frozen to the limiting
orbit.  From \eqref{eq:u-scaled}, \eqref{eq:Ustar}, and the analytic local
expansion,
\begin{equation}
 \frac12r_{i,n}r_{i-2,n}\sin(gW_{i,n})
 =\frac{gW_{i,n}}8+g^3k_i^*+R_{i,n},                                     \label{eq:corrected-local-area}
\end{equation}
where the shadowing estimate and exponential stable tail give
\begin{equation}
 \sum_i|R_{i,n}|
 \le CNg^5+Cg^3(g^2+2^{-m})+Cg^3(2/5)^m=O(N^{-4}).                        \label{eq:area-remainder-sum}
\end{equation}
The first sum is now exactly normalized by turning:
\begin{equation}
 \frac g8\sum_iW_{i,n}=\frac g4\sum_iw_i^{(n)}=\frac\pi4.                \label{eq:linear-area-exact}
\end{equation}
On the other hand, only the cubic density is compared to its half-line
limit:
\begin{equation}
 \sum_i k_i^*=-\frac{5N}{48}+K_*+O((2/5)^m).                              \label{eq:cubic-density-sum}
\end{equation}
Finally \eqref{eq:skeleton-angle-expansion} gives
\begin{equation}
 \theta=\frac g2w_{0,n}+O(g^3)
       =\frac g2w_0^*+O(g^3),\qquad
 p(\theta)=\frac{g^3(w_0^*)^3}{16}+O(g^5).                               \label{eq:pendant-cubic}
\end{equation}
Combining \eqref{eq:area-pendant}--\eqref{eq:pendant-cubic},
\begin{equation}
 A_n=\frac\pi4+g^3\left[-\frac{5N}{48}+K_*+
                  \frac{(w_0^*)^3}{16}\right]+O(N^{-4}).                 \label{eq:area-in-g}
\end{equation}

Use \eqref{eq:g-turning} and put
$B_*=K_*+(w_0^*)^3/16$.  Then
\begin{equation}
 A_n=\frac\pi4+\pi^3\frac{-5N/48+B_*}{(N+M_*)^3}+O(N^{-4}).              \label{eq:area-in-N}
\end{equation}
Since $N=n-1$,
\[
 N+M_*=n+(M_*-1),\qquad
 (N+M_*)^{-3}=n^{-3}\left(1-\frac{3(M_*-1)}n+O(n^{-2})\right).
\]
Collecting the $n^{-3}$ terms gives
\begin{equation}
 \begin{split}
 A_n={}&\frac\pi4-\frac{5\pi^3}{48n^2}\\
 &+\frac{\pi^3}{n^3}
 \left[K_*+\frac{(w_0^*)^3}{16}+\frac{5M_*}{16}-\frac5{24}\right]
 +O(n^{-4}).
 \end{split}                                                              \label{eq:area-expanded}
\end{equation}
By Proposition~\ref{prop:qstar-telescoping}, the bracket equals $-q_*$.  This
proves Theorem~\ref{thm:intro-area}.

We emphasize the logical status.  Localization was applied to the maximizer
from Theorem~\ref{thm:imported-even}; the fixed-$C$ uniqueness argument after
\eqref{eq:w-shadow} identified its radial data with the perturbative branch;
and \eqref{eq:exact-reconstruction} is the exact area of that same polygon.
Thus \eqref{eq:intro-area} concerns $A_n$ itself, not merely a feasible
construction.

The error accounting is uniform: local Taylor terms contribute
$N\,O(g^5)=O(n^{-4})$; the profile continuation contributes $O(g^5)$; the
opposite boundary contributes $O(g^3 2^{-m})$; the turning error changes the
leading $Ng^3$ term by $O(n^{-4})$; and the pendant Taylor remainder is
$O(g^5)$.  No discarded term is larger than $O(n^{-4})$.

\section{The Foster--Szab\'o gap}
\label{sec:gap}

For even $n$, Foster and Szab\'o's diameter-graph argument gives the upper
bound~\cite{FosterSzabo2007}
\begin{equation}
 \overline A_n=\frac n2\sin\frac\pi n
 -\frac{n-1}{2}\tan\frac{\pi}{2n-2}.                                     \label{eq:FS-bound}
\end{equation}
Taylor expansion, including the conversion $n-1$ in the second term, gives
\begin{equation}
 \overline A_n=\frac\pi4-\frac{5\pi^3}{48n^2}
 -\frac{\pi^3}{24n^3}+O(n^{-4}).                                         \label{eq:FS-expansion}
\end{equation}
Subtracting \eqref{eq:intro-area} proves
\begin{equation}
 \boxed{
 \overline A_n-A_n=\left(q_*-\frac1{24}\right)\frac{\pi^3}{n^3}
 +O(n^{-4}).}                                                             \label{eq:FS-gap}
\end{equation}
The certified interval for $q_*$ implies
\begin{equation}
\resizebox{0.98\textwidth}{!}{$
0.0733883168566581807817721147634935660406529025537701331863443
<q_*-\frac1{24}<
0.0733883168566581807817721148573881665936264190928039605440678.
$}                                                                         \label{eq:gap-certificate}
\end{equation}
In particular the leading gap is strictly positive.  This proves the second
part of Theorem~\ref{thm:intro-area} and gives an intrinsic extremal meaning
to the constant that also appears in the strongest finite constructions.

\section{The analytic master expansion}
\label{sec:master-expansion}

We strengthen the finite-order calculation to an analytic interpolation in
$1/n$.  The argument uses two nearby spatial weights so that the remote
boundary is exponentially small uniformly along the analytic branch.

Fix
\begin{equation}
 \omega_-:=\frac{31}{40},\qquad \omega_+:=\frac45,\qquad
 X_\pm=\ell^1_{\omega_\pm}(\N_0),\qquad
 \|x\|_{X_\pm}=\sum_{j\ge0}\omega_\pm^{-j}|x_j|.                          \label{eq:two-weights}
\end{equation}
Then $\sigma_0<3/4<\omega_-<\omega_+<1$ and
$\omega_-/\omega_+=31/32$.

Let $L=D_a\mathcal G(0,a^*)$ be the half-line Jacobi operator associated
with \eqref{eq:G-zero}.  The finite inverse estimate
\eqref{eq:inverse-decay} gives
\begin{equation}
 \|L_m^{-1}f\|_{1,\omega_-}^{(m)}\le B_-\|f\|_{1,\omega_-}^{(m)}           \label{eq:Lm-minus}
\end{equation}
uniformly in $m$.

\begin{lemma}[Half-line Jacobi isomorphism]
\label{lem:halfline-isomorphism}
The operator $L:X_-\to X_-$ is a bounded isomorphism, and
$\|L^{-1}\|\le B_-$.
\end{lemma}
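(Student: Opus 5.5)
The plan is to obtain $L^{-1}$ on $X_-$ as a limit of the finite inverses $L_m^{-1}$, using the bound \eqref{eq:Lm-minus}, which is uniform in $m$. Before that I settle boundedness. The operator $L$ is tridiagonal. Its entries are the $g=0$ derivatives in \eqref{eq:G-zero} evaluated along $a^*$, and by the orbit bounds of Lemma~\ref{lem:stable-bounds} they are uniformly bounded. A tridiagonal matrix with bounded entries is bounded on any weighted $\ell^1$ space whose weight ratio is fixed, since shifting by one index costs at most a factor $\omega_-^{\pm1}$. So $L\in\mathcal B(X_-)$.

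The main step is a uniform lower bound, $\|h\|_{X_-}\le B_-\|Lh\|_{X_-}$ for all $h\in X_-$, proved first for finitely supported $h$. Take $h$ supported in $\{0,\dots,k\}$ and choose $m>k+2$. Then $Lh$ and $L_mh$ coincide: rows $0,\dots,k+1$ of $L$ and $L_m$ agree, because only the reflected terminal row $m-1$ is modified, and all other rows vanish on $h$. Hence by \eqref{eq:Lm-minus}
\[
 \|h\|_{X_-}=\|h\|^{(m)}_{1,\omega_-}\le B_-\|L_mh\|^{(m)}_{1,\omega_-}=B_-\|Lh\|_{X_-}.
\]
Finitely supported sequences are dense in $X_-$ and $L$ is bounded, so the inequality holds on all of $X_-$. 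This makes $L$ injective with closed range.

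Surjectivity is where I expect the real difficulty. I would handle it directly. Given $f\in X_-$, set $h^{(m)}=L_m^{-1}(f_0,\dots,f_{m-1})$ and extend by zero. By \eqref{eq:Lm-minus}, $\|h^{(m)}\|_{X_-}\le B_-\|f\|_{X_-}$. For fixed $j$ the coordinates are bounded, so a diagonal subsequence converges coordinatewise to some $h$, and Fatou gives $\|h\|_{X_-}\le B_-\|f\|_{X_-}$. Each fixed row $i$ of $L_mh^{(m)}=f$ involves only the coordinates $i-1,i,i+1$, and for $m>i+2$ that row is the same as row $i$ of $L$. Passing to the limit therefore gives $(Lh)_i=f_i$ for every $i$, i.e.\ $Lh=f$.

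The point to watch is that the reflected endpoint row does not contaminate any fixed row. This holds because the reflection only changes row $m-1$, and that row recedes to infinity. The decay estimate \eqref{eq:inverse-decay} is not needed here, since coordinatewise compactness together with Fatou replaces it. Combining the two parts, $L$ is a bijection with $\|L^{-1}\|\le B_-$, and it is bounded with a bounded inverse.
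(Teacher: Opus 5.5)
Your proof is correct, and it differs from the paper's mainly in how injectivity is obtained.

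\textbf{Surjectivity.} This half is essentially the paper's existence argument: finite solves with $L_m^{-1}$, coordinatewise compactness from \eqref{eq:Lm-minus}, and the observation that the reflected row $m-1$ eventually stops affecting every fixed row. The only change is that you run it for arbitrary $f\in X_-$ using Fatou. The paper does it for finitely supported $f$ and then extends by density.

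\textbf{Injectivity.} The paper argues variationally. $L$ is the symmetric Jacobi matrix whose quadratic form is the half-line analogue of \eqref{eq:endpoint-coercivity}. Passing to the limit in the finite coercive forms gives $-\langle x,Lx\rangle\ge c\|x\|_2^2$ on $X_-\subset\ell^2$, so $Lx=0$ forces $x=0$.

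You instead read the a priori bound $\|h\|_{X_-}\le B_-\|Lh\|_{X_-}$ directly off \eqref{eq:Lm-minus}. This works because $L_mh=Lh$ as soon as $m>k+2$ for $h$ supported in $\{0,\dots,k\}$. You then extend the bound by density, and this is exactly where your separate check that $L$ is bounded on $X_-$ is needed.

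\textbf{What each route buys.} Your route stays entirely inside the weighted space. A single inequality gives injectivity, closed range, and the constant $B_-$. It uses no symmetry or positivity of $L$ beyond what is already built into \eqref{eq:Lm-minus}. So it applies to any banded operator whose truncations are uniformly invertible in the given norm.

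The paper's route rests on the coercive structure. It yields uniqueness in the larger space $\ell^2$, independently of the weight. The price is a limiting argument in the quadratic forms, whose finite versions carry the extra reflected endpoint term.
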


\begin{proof}
For finitely supported $f$, solve $L_mx^{(m)}=f|_{0:m-1}$ and extend by
zero.  Estimate \eqref{eq:Lm-minus} gives coordinatewise compactness; every
limit solves $Lx=f$ and obeys the same norm bound.  The limit is unique: if
$Lx=0$ with $x\in X_-\subset\ell^2$, monotone passage in the finite coercive
forms gives $-\langle x,Lx\rangle\ge c\|x\|_2^2$, hence $x=0$.  Density of
finitely supported sequences completes the extension to $X_-$.
\end{proof}

Put $s=g^2$.  Filling the removable singularities in
\eqref{eq:Pg}--\eqref{eq:Bg} produces one local analytic bulk function
$\mathcal F(s;x,y,z)$ and one analytic marked function.  The exact
homogeneous state gives the essential cancellation
\begin{equation}
 \mathcal F(s;0,0,0)=0.                                                    \label{eq:analytic-bulk-zero}
\end{equation}
Uniform Cauchy estimates on a common polydisc and the finite bandwidth show
that
\begin{equation}
 \mathcal G:(s,a)\longmapsto\mathcal G(s,a)                               \label{eq:analytic-G}
\end{equation}
is real analytic from a neighborhood of $(0,a^*)$ in $\R\times X_-$ to
$X_-$.  Lemma~\ref{lem:halfline-isomorphism} and the analytic implicit
function theorem give a unique analytic branch
\begin{equation}
 a(s)=a^*+\sum_{k\ge1}s^ka^{[2k]}\in X_-,\qquad \mathcal G(s,a(s))=0.      \label{eq:analytic-a}
\end{equation}
At each order,
\begin{equation}
 La^{[2k]}=F_k(a^{[2]},\ldots,a^{[2k-2]}),                                \label{eq:profile-recursion}
\end{equation}
so the same inverse $L^{-1}$ recursively computes every coefficient.

The exact local coordinate map gives an analytic edge profile $w(s)$, and
\eqref{eq:analytic-bulk-zero} implies that
$(w_j(s)-1)_{j\ge0}$ is analytic with values in $X_-$.  Therefore
\begin{equation}
 M(s):=2\sum_{j\ge0}(w_j(s)-1)                                            \label{eq:M-s}
\end{equation}
is analytic, with $M(0)=M_*$.

Compare the finite solution with the prefix of the exact half-line branch,
not merely with $a^*$.  All rows except the reflected terminal row are then
satisfied exactly.  The $X_-$ bound gives a terminal mismatch
$O(\omega_-^m)$ in the ordinary norm.  Measured in $X_+$, this is
\begin{equation}
 O\left((\omega_-/\omega_+)^m\right)=O((31/32)^m).                         \label{eq:terminal-two-weight}
\end{equation}
The uniform $X_+$ inverse and contraction estimate yield, for some
$\theta\in(0,1)$ and uniformly for $|s|\le s_1$,
\begin{equation}
 \|a^{(m)}(s)-a(s)|_{0:m-1}\|_{X_+}^{(m)}\le C\theta^m.                   \label{eq:analytic-shadow}
\end{equation}
The same estimate holds for the edge variables and localized sums.

The infinite-boundary turning equation is
\begin{equation}
 g[N+M(g^2)]=\pi.                                                          \label{eq:turning-infinite}
\end{equation}
Put $z=N^{-1}$ and $g=zh$.  This becomes
\begin{equation}
 h[1+zM(z^2h^2)]-\pi=0.                                                    \label{eq:h-equation}
\end{equation}
Its derivative in $h$ at $(z,h)=(0,\pi)$ is one.  Hence there is an analytic
$h(z)$ with $h(0)=\pi$ and
\begin{equation}
 \widehat g_N=zh(z).                                                       \label{eq:ghat}
\end{equation}
The exact finite turning equation differs by $O(\theta^m)$; its derivative
is $N+O(1)$, so
\begin{equation}
 g_N-\widehat g_N=O(N^{-2}\theta^m).                                      \label{eq:g-exponential}
\end{equation}

The following lemma makes the analytic area mechanism explicit.

\begin{lemma}[Analytic area defect]
\label{lem:analytic-area-defect}
There are real-analytic functions $B(g)$ and $D(g)$ near zero, both
$O(g^3)$, such that the area of the infinite-boundary profile is
\begin{equation}
 \widehat A_N=\frac\pi4+N B(\widehat g_N)+D(\widehat g_N).                 \label{eq:analytic-area-decomposition}
\end{equation}
The true finite area satisfies
\begin{equation}
 A_n-\widehat A_N=O(\theta_1^m)                                           \label{eq:area-exponential}
\end{equation}
for some $\theta_1\in(0,1)$.
\end{lemma}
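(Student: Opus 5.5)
We build the infinite-boundary area from the same local shoelace decomposition used in Section~\ref{sec:area}, but now along the analytic half-line branch $a(s)$ of \eqref{eq:analytic-a} instead of the frozen profile $a^*$. For the profile $w(s)$ and radii $u_i(s)=u_g+g^2a_i(s)$ with $s=g^2$, we reflect the half-line branch by $\widetilde w_{-j-1}=w_j$. We then write each shoelace term $\tfrac12 r_ir_{i-2}\sin(gW_i)$, where $W_i=\widetilde w_{i-2}+\widetilde w_{i-1}$, as the regular-state value $\tfrac12 r_g^2\sin(2g)$ plus an analytic local correction $g\,\kappa(s;a_{i-2},a_{i-1},a_i,\ldots)$. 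The essential point is that this correction vanishes at the homogeneous state, as in \eqref{eq:analytic-bulk-zero}.

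\textbf{Step 1 (decomposition).} First isolate the linear term $\tfrac g8\sum_i W_i$, exactly as in \eqref{eq:linear-area-exact}. Along the infinite profile, the turning equation \eqref{eq:turning-infinite} gives $\tfrac g4(N+M(s))=\pi/4$. After this, what remains splits into two parts. The first is $N$ copies of the homogeneous excess per site, $\tfrac12 r_g^2\sin 2g-\tfrac{g}{4}$; this is an analytic function of $g$ and is $O(g^3)$ by the expansion of $r_g$. Call it $B(g)$. The second is the sum over $i\in\mathbb Z$ of the localized defects minus their linearized part, together with the pendant term $p(\theta(s))$ and the correction from the marked radial coefficient. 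Call this $D(g)$. Here $\theta(s)$ is analytic through the marked analytic function.

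\textbf{Step 2 (analyticity of $D$).} Each localized defect is an analytic function of $(s,a_{i-2},\ldots,a_i)$ that vanishes when the $a$'s are zero, and is therefore bounded by a constant times $\sum|a_k|$ on a common polydisc. Since $a(s)\in X_-$ analytically, the series $\sum_i$ converges uniformly on a complex neighbourhood of $s=0$, so $D$ is analytic. Factoring out $g^3$ is legitimate because every shoelace term is $g$ times an analytic function of $s$ and the local variables. The $O(g^2)$ part of that function is accounted for by the linear term and turning. What survives in $D$ is therefore $g^3$ times an analytic function, since $\sin(gW)=gW-g^3W^3/6+\cdots$ and $r_ir_{i-2}=\tfrac14+O(g^2)$. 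Analyticity in $g$ then follows by composition with $s=g^2$ and multiplication by odd powers of $g$.

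\textbf{Step 3 (exponential comparison).} This is the main obstacle. Apply the same decomposition to the true finite maximizer using its exact turning equation \eqref{eq:turning}. The difference from $\widehat A_N$ has three sources. First, the profile mismatch \eqref{eq:analytic-shadow}, which is $O(\theta^m)$ in $X_+$ and therefore also in $\ell^1$. Second, the $g$-mismatch \eqref{eq:g-exponential}; multiplied by the $NB'$ term, which is $O(Ng^2)$, it contributes $O(\theta^m)$. Third, the replacement of the reflected terminal row and tail sum beyond $m$ by the half-line tail. In the $X_-$ norm this tail is $O(\omega_-^m)$, and the local defects are Lipschitz in the $a$'s, so this contribution is also exponentially small. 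The delicate point is that the finite polygon is a cycle of length $N$ with two boundary layers, while the infinite profile is half-line based. One must check that reflection pairs each finite site with the corresponding half-line site, so that no $O(1)$ miscount of regular sites occurs. The opposite edge is fixed, and the count is handled exactly by using $N$ in $NB$. Taking $\theta_1=\max(\theta,\omega_-)^{1/2}$-type constants then yields \eqref{eq:area-exponential}.
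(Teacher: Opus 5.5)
Your proposal is correct and follows essentially the paper's own route. You split off the homogeneous per-site density $\tfrac g4+B(g)$ and sum the localized analytic defects, each carrying a factor of the profile defect in $X_-$, over both reflected tails together with the analytic pendant term. You let the infinite turning equation absorb the linear terms so that $D(g)=O(g^3)$, and then bound $A_n-\widehat A_N$ by \eqref{eq:analytic-shadow}, the $X_-$ tail beyond $m$, and the $g$-mismatch \eqref{eq:g-exponential} times a polynomial factor. Two cosmetic points: in Step 2 what the linear term and turning remove is the leading $s^0$ part $W_i/8$ of the shoelace factor, not an ``$O(g^2)$ part'', and the square root in your choice of $\theta_1$ is harmless but unnecessary, since the bounds are already powers of $m$.
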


\begin{proof}
Let $\mathcal L(g;\text{local profile})$ be the exact local shoelace density
in \eqref{eq:shoelace}.  On the homogeneous state it is analytic and has
the form
\begin{equation}
 \mathcal L_{\mathrm{reg}}(g)=\frac g4+B(g),\qquad B(g)=O(g^3),            \label{eq:B-density}
\end{equation}
the absence of a quadratic term following from oddness of the local sine
factor.  On the half-line branch, subtract the homogeneous density before
summing the two reflected tails.  The local analytic difference contains at
least one factor of the profile defect $w(s)-1\in X_-$, so the series and all
of its parameter derivatives converge normally.  Add the marked pendant
term and use exact turning to remove the sum of the linear $g/4$ terms.  The
resulting localized correction $D(g)$ is analytic.  The local expansion
\eqref{eq:corrected-local-area} and the fact that the marked term begins at
cubic order show $D(g)=O(g^3)$.

This proves \eqref{eq:analytic-area-decomposition}.  Estimate
\eqref{eq:analytic-shadow} controls each absolutely localized difference by
$O(\theta^m)$; the regular bulk part changes by a polynomial factor times
\eqref{eq:g-exponential}.  Absorbing that factor by increasing the base of
the exponential proves \eqref{eq:area-exponential}.
\end{proof}

Substitute \eqref{eq:ghat} into
\eqref{eq:analytic-area-decomposition}.  Because $B(g)=O(g^3)$,
$z^{-1}B(zh(z))$ is analytic at $z=0$ and begins at order $z^2$; the same is
true of the localized term at the required order.  Therefore
\begin{equation}
 \widehat A_N=\frac\pi4+\sum_{k\ge2}b_kN^{-k}                              \label{eq:N-series}
\end{equation}
with a convergent series near $N^{-1}=0$.  Finally
\begin{equation}
 \frac1N=\frac{1/n}{1-1/n}                                                \label{eq:N-to-n-analytic}
\end{equation}
is analytic at $1/n=0$.  Composing \eqref{eq:N-series} and using
\eqref{eq:area-exponential} proves Theorem~\ref{thm:intro-master}.  The first
two coefficients agree with the separately established sharp expansion:
\begin{equation}
 a_2=-\frac{5\pi^3}{48},\qquad a_3=-q_*\pi^3.                             \label{eq:first-master-coefficients}
\end{equation}

\section{Reproducibility}
\label{sec:reproducibility}

The accompanying archive contains standard Python/SymPy scripts for the
finite polynomial identities, recurrence expansions, reflection and
shoelace indexing, the finite-section endpoint algebra, the Foster--Szab\'o
series, and the rational certificate for $q_*$.  In particular, the
certificate evaluates rational endpoints rather than inferring an interval
from floating-point residuals.  The scripts also check the exact domain
condition $2\widehat w_i^2<5$ coordinate by coordinate.

The proofs above do not treat a successful script run as a substitute for
the analytic arguments.  The symbolic calculations verify finite algebraic
identities used in Proposition~\ref{prop:all-depth-cubic},
Proposition~\ref{prop:qstar-telescoping}, and the local area expansion; the
stable-orbit, uniform inverse, localization, and analytic implicit-function
arguments are mathematical proofs independent of floating-point output.

No proprietary solver or private data are required.  A version-pinned
environment and a one-command verification sequence are supplied with the
public archive.

\bibliographystyle{amsplain}
\bibliography{references}

\end{document}